\documentclass[hidelinks,onefignum,onetabnum]{siamart220329}


\usepackage{lipsum}
\usepackage{amsfonts}
\usepackage{graphicx}
\usepackage{epstopdf}
\usepackage{algorithmic}
\ifpdf
  \DeclareGraphicsExtensions{.eps,.pdf,.png,.jpg}
\else
  \DeclareGraphicsExtensions{.eps}
\fi


\newsiamremark{remark}{Remark}
\newsiamremark{hypothesis}{Hypothesis}
\crefname{hypothesis}{Hypothesis}{Hypotheses}
\newsiamthm{claim}{Claim}

\headers{Rational Approximation via the EIM}{A.~Li and Y.~Li}

\title{A new rational approximation algorithm via the empirical interpolation method\thanks{Submitted to the editors DATE.
\funding{This work was supported by the National Science Foundation of China (no.~12471346) and the Fundamental Research Funds for the Zhejiang Provincial universities (no. 226-2023-00039).}}}

\author{Aidi Li\thanks{School of Mathematical Sciences, Zhejiang University, Hangzhou, Zhejiang 310058, China
  (\email{22235031@zju.edu.cn}).}
\and Yuwen Li\thanks{Corresponding author. School of Mathematical Sciences, Zhejiang University, Hangzhou, Zhejiang 310058, China
  (\email{liyuwen@zju.edu.cn}).} 
}
\usepackage{amsopn}


\usepackage{booktabs} 
\usepackage{amsmath}
\usepackage[cal=cm, scr=rsfs]{mathalfa}
\usepackage{amssymb}
\usepackage{multirow} 
\usepackage{tabularx}



\ifpdf
\hypersetup{
  pdftitle={Rational Empirical Interpolation Methods with Applications},
  pdfauthor={Aidi Li and Yuwen Li}
}
\fi

\begin{document}
\maketitle
    
\begin{abstract}
We present a new rational approximation algorithm based on the empirical interpolation method for interpolating a family of parametrized functions to rational polynomials with invariant poles, leading to efficient numerical algorithms for space-fractional differential equations, parameter-robust preconditioning, and evaluation of  matrix functions.  Compared to classical rational approximation algorithms, the proposed method is more efficient for approximating a large number of target functions. In addition, we derive a convergence estimate of our rational approximation algorithm using the metric entropy numbers. Numerical experiments are included to demonstrate the effectiveness of the proposed method.
\end{abstract}

\begin{keyword}
rational approximation, empirical interpolation method, entropy numbers, fractional PDE, preconditioning, exponential integrator


\end{keyword}

\begin{MSCcodes}
41A20, 41A25, 65D15
\end{MSCcodes}

\section{Introduction}
In recent decades, physical modelling and numerical methods involving fractional order differential operators have received extensive attention, due to their applications in nonlocal diffusion processes (cf.~\cite{bucur2016nonlocal}). The fractional order operators could be defined using fractional Sobolev spaces or the spectral decomposition of integer order operators, see \cite{lischke2018fractional,d2020numerical}. Direct discretizations of those fractional operators inevitably lead to dense
differential matrices and costly implementations. There have been several efficient numerical methods in \cite{BonitoPasciak2015,Vabishchevich2015,harizanov2018optimal,hofreither2020unified,DanczulSchoberl2022} reducing the inverse of spectral fractional diffusion to a combination of inverses of second order elliptic operators. In particular, it suffices to approximate the power function $x^{-s}$ with $0<s<1$ by a rational function of the form
\begin{equation}\label{partialfraction}
    r_n(x)=\sum_{i=1}^{n} \frac{c_i}{x+b_i}
\end{equation}
to efficiently solve $\mathcal{A}^su=f$, where $\mathcal{A}$ is a Symmetric and Positive-Definite (SPD) local differential operator. 
In addition, rational approximation is also quite useful for parameter-robust preconditioning of finite-element discretized complex multi-physics systems (cf.~\cite{HolterKuchtaMardal2021,BudisaHu2023}) and efficient evaluation of exponential-type matrix functions (cf.~\cite{lopez2006analysis,DruskinKnizhnermanZaslavsky2009}) in exponential integrators for stiff dynamical systems. 

In the literature, rational approximation algorithms include the classical Remez algorithm in \cite{PetrushevPopov1987}, the BURA method in \cite{harizanov2018optimal}, the AAA algorithm in \cite{nakatsukasa2018aaa}, the BRASIL algorithm in \cite{Hofreither2021}, the Orthogonal {and Chebyshev Greedy Algorithms in \cite{LiZikatanovZuo2024,AdlerHuWangXue2024}, the Lawson's iteration in \cite{ZhangYangYangZhang2024},} etc. For example, the seminal and highly efficient AAA algorithm makes use of the barycentric representation $\big(\sum_{i=1}^n\frac{w_iF_i}{z-z_i}\big)/\big(\sum_{i=1}^n\frac{F_i}{z-z_i}\big)$ of rational functions. The aforementioned algorithms are all designed for the rational approximation of a single target function. {Recently, useful variants of AAA algorithms have also been developed for efficient simultaneous rational approximation of  a set of target functions, see, e.g., ~\cite{monzon2020multi,GoseaGuttel2021,monzon2023Iterative,Rodriguez2023pAAA}.}

In this paper, we present a {Rational Approximation Algorithm via the Empirical Interpolation Method (rEIM), which produces rational approximants in the form \eqref{partialfraction}.} The rEIM is a variant of the classical Empirical Interpolation Method (EIM), which was first introduced in \cite{barrault2004empirical} and further generalized in, e.g.,  \cite{maday2007general, ChaturantabutSorensen2010,EftangGreplPatera2013,maday2013generalized,NegriManzoniAmsallem2015,NguyenPeraire2023}.
The EIM is a greedy algorithm designed to approximate parametric functions, particularly useful in the context of model reduction of parametrized Partial Differential Equations (PDEs) and high-dimensional data analysis. Our rEIM adaptively selects basis functions from the set $$\mathcal{D}=\left\{\frac{\eta+b}{x+b}\right\}_{b \in (0,+\infty)}$$
and generates rational approximants $\{\Pi_nf_j\}_{1\leq j\leq J}$ interpolating a family of target functions $\{f_j\}_{1\leq j\leq J}$ at another adaptive set of interpolation points. Unlike classical EIMs, each target $f_j$ is not contained in $\mathcal{D}$.


{The applications considered in this work require the partial fraction decomposition \eqref{partialfraction} of rational polynomials approximating certain target functions. However, classical AAA-type rational approximation algorithms produce barycentric representation of rational approximants and the transformation from the barycentric form to \eqref{partialfraction} leads to loss of orders of accuracy. In contrast, the rEIM} directly outputs approximants of the form \eqref{partialfraction}, thereby avoiding the error arising from computing the poles $\{-b_i\}_{1\leq i\leq n}$ and residues $\{c_i\}_{1\leq i\leq n}$. The rEIM is designed to efficiently interpolate a family of functions $\{f_j\}_{1\leq j\leq J}$ to rational forms 
\begin{equation*}
(\Pi_nf_j)(x)=\sum_{i=1}^n\frac{c_{j,i}}{x+b_i},    
\end{equation*}
compared with existing algorithms in \cite{harizanov2018optimal,nakatsukasa2018aaa,Hofreither2021,LiZikatanovZuo2024} for approximating a single function. The rEIM would gain computational efficiency when the number $J$ of target functions is large. We remark that the poles $\{-b_i\}_{1\leq i\leq n}$ of $\Pi_nf_j$ are invariant for all $1\leq j\leq J$, a feature that saves the cost of adaptive step-size selection, solving parametrized problems, and approximating matrix functions, see section \ref{sec:application} for details. 

Following the framework in  \cite{Li2024CGA}, we also derive a sub-exponential convergence rate of the rEIM:
\begin{equation*}
 \left\|f-\Pi_{n-1}f\right\|_{L^\infty(I)}=\left(1+L_{n-1}\right)\left(\prod_{k=1}^{n-1}\left(1+L_k\right)\right)^{\frac{1}{n}}\|f\|_{\mathscr{L}_1(\mathcal{D})}O(\exp(-cn^\frac{1}{2})),    
\end{equation*}
where $\Pi_{n-1}f$ is the rEIM interpolation at the $(n-1)$-th iteration, $L_{n}$ is the Lebesgue constant of $\Pi_n$, $\|\bullet\|_{\mathscr{L}_1(\mathcal{D})}$ is the variation norm, and $c>0$ is an absolute constant. Our key ingredient is a careful analysis of the asymptotic decay rate of the entropy numbers $\varepsilon_n(B_1(\widetilde{\mathcal{D}}))$ for an analytically parametrized dictionary $\widetilde{\mathcal{D}}$ (see Theorem \ref{thmnwidth}), which is of independent interest in approximation and learning theory (cf.~\cite{Lorentz1996,SiegelXuFoCM,CohenDeVorePetrova2022}).
In \cite{DruskinKnizhnermanZaslavsky2009,DanczulHofreitherSchoberl2022},  rational Krylov space methods with a priori given invariant {Zolotarev} poles are used for computing matrix exponentials and solving fractional PDEs. Compared with \cite{DanczulHofreitherSchoberl2022}, the convergence analysis of the rEIM applies to a posteriori selected nested poles and our algorithm (Algorithm \ref{alg:rEIM}) directly computes the poles as well as interpolation points without using extra Gram--Schmidt orthogonalization as in the rational Krylov method.

Throughout this paper, $C$ is a positive generic constant that may change from line to line but independent of target functions and $n$. By $A\lesssim B$ we mean $A\leq CB$. 
The rest of this paper is organized as follows. In section \ref{sec:EIM}, we present the rEIM and its convergence estimate. In section \ref{sec:application}, we discuss important applications of the rEIM. section \ref{sec:convergenceREIM} is devoted to the convergence analysis of relevant entropy numbers and the rEIM. Numerical experiments are presented in section \ref{sec:num}.

\section{Rational Approximation via the Empirical Interpolation Method}\label{sec:EIM}
Throughout this paper, we shall focus on a positive interval $I$ with left endpoint $\eta>0$.
Given a parameter set $\mathcal{P}\subset\mathbb{R}^d$ and a collection of parameter-dependent functions $\widetilde{\mathcal{D}}=\{\tilde{g}(\bullet,\mu): \mu\in\mathcal{P}\}\subset C(I)$ on $I$, the classical EIM selects a set of functions $\tilde{g}(\bullet,\mu_1)$, ..., $\tilde{g}(\bullet,\mu_n)$ and interpolation points $\{x_1, \ldots, x_n\}\subset I$. Then for each parameter $\mu$ of interest, the EIM constructs $f_n(x,\mu) = \sum_{m=1}^n\beta_m(\mu)q_m(x)$ interpolating $f(\bullet,\mu)$ at $x_1, \ldots, x_n$, where  $q_1, ... , q_n$ are interpolation basis functions such that ${\rm Span}\{q_1, ..., q_n\}={\rm Span}\{\tilde{g}(\bullet,\mu_1), ..., \tilde{g}(\bullet,\mu_n)\}$.  

In order to efficiently {approximate} a family of parametrized functions by rational functions of the form \eqref{partialfraction}, we make use of the following rational dictionary 
\begin{equation}\label{rationaldictionary}
\mathcal{D}= \mathcal{D}((0,\infty))=\left\{g(\bullet,b)\in C(I): g(x,b)=\frac{\eta+b}{x+b},~b \in(0,\infty)\right\}
\end{equation}
and its subset $\mathcal{D}(\mathcal{B})\subset\mathcal{D}((0,\infty))$, where $\mathcal{B}\subset(0,\infty)$ is a problem-dependent and user-specified finite set. 
For the ease of subsequent analysis in section \ref{sec:convergenceREIM}, each $g$ in $\mathcal{D}$ is normalized such that $\|g\|_{L^\infty(I)}=1$.

In the context of classical EIMs, a linear combination of $g(\bullet,b_1), \ldots, g(\bullet,b_n)$ at parameter instances $b_1, \ldots, b_n$ is used to approximate $g(\bullet,b)$ for varying input parameters $b$. We remark that our goal is different from classical EIMs. In Algorithm \ref{alg:rEIM}, we present ``rEIM", a variant of the EIM using the rational dictionary \eqref{rationaldictionary}, aiming at efficiently rational approximation of a family of functions outside of $\mathcal{D}$.

\begin{algorithm}[thp]
\caption{{Rational Approximation via the EIM (rEIM)}}\label{alg:rEIM}
  \begin{algorithmic}
    \STATE \textbf{Input:} an integer $n>0$,  a dictionary $\mathcal{D}(\mathcal{B})$
and a set $\Sigma$ of possible interpolation points; set $\Pi_0=0$.

   \FOR{$m=1:n$}
   \STATE select $g_m=g(\bullet,b_m)\in\mathcal{D}(\mathcal{B})$ such that \begin{equation*}
    \|g_m-\Pi_{m-1}g_m\|_{L^\infty(I)}=\max_{g\in\mathcal{D}(\mathcal{B})}\|g-\Pi_{m-1}g\|_{L^\infty(I)};
\end{equation*}
   \STATE set $r_m=g_m-\Pi_{m-1}g_m$ and select $x_m\in\Sigma$ such that 
\begin{equation*}
|r_m(x_m)|=\max_{x\in\Sigma}|r_m(x)|;
\end{equation*}  
{set $\mathbb{G}_m=(g(x_i,b_j))_{1\leq i,j\leq m}$ and construct $\Pi_m$ as
}
\begin{equation}\label{Gm}
    \Pi_mf=(g(\bullet,b_1),\ldots,g(\bullet,b_m))\mathbb{G}_m^{-1}(f(x_1),\ldots,f(x_m))^\top;
\end{equation}
   \ENDFOR
\STATE \textbf{Output:} the rational interpolant $\Pi_nf$ for a family of target functions $f$ which are not necessarily in $\mathcal{D}(\mathcal{B})$.
  \end{algorithmic}  
\end{algorithm}

It is straightforward to check that $\Pi_nf$ is a rational function and $(\Pi_nf)(x_i)=f(x_i)$ for $i=1, 2, \ldots, n.$ For a family of target functions, the rEIM is able to efficiently interpolate them by applying a small-scale matrix $\mathbb{G}_n^{-1}$ to $(f(x_1),\ldots,f(x_n))^\top$ with $20\leq n\leq40$, {see the sub-exponential convergence rate of the rEIM in section \ref{subsecConvergenceEIM} and the numerical examples in section \ref{sec:num}.}

{At the $m$-step of Algorithm \ref{alg:rEIM}, set $q_m=r_m/r_m(x_m)$ and \begin{equation*}
\mathbb{Q}_m=(q_j(x_i))_{1\leq i,j\leq m}=\begin{pmatrix}q_1(x_1)&\cdots&q_m(x_1)\\
\vdots&~&\vdots\\
q_1(x_m)&\cdots&q_m(x_m)\end{pmatrix},
\end{equation*}
which is a lower triangular matrix with unit diagonal entries. Then each $q_m$ is a rational polynomial and an alternative expression of the interpolation $\Pi_m$ is  
\begin{equation}\label{Qm}
    \Pi_mf:=(q_1,\ldots,q_m)\mathbb{Q}_m^{-1}(f(x_1),\ldots,f(x_m))^\top.
\end{equation}
An additional transformation  $(q_1,\ldots,q_m)=(g(\bullet,b_1),\ldots,g(\bullet,b_m))\mathbb{T}_m$ is required for converting $\Pi_mf$ in \eqref{Qm} into a partial fraction $\sum_{j=1}^m\frac{c_j}{x+b_j}$, where $\mathbb{T}_m$ is a triangular matrix of order $m$. 
Although $\mathbb{G}_m=\left(g(x_i,b_j)\right)_{1\leq i,j\leq m}$ is a Cauchy matrix with possibly a large condition number, we note that the explicit  inversion formula (cf.~\cite{Heinig1992Inversion}) of a Cauchy matrix is helpful for reducing the rounding error. The advantage of \eqref{Gm} is that it directly outputs a rational polynomial of the desired form \eqref{partialfraction}.}    

When the number $J$ of input target functions $\{f_j\}_{1\leq j\leq J}$ is large, the rEIM could be more efficient than repeatedly calling a classical rational approximation algorithm designed for a single target function. {In particular, the total computational time of the rEIM is $T_{\rm setup}+J\cdot T_{\rm online}$, where  $T_{\rm setup}$ is the elapsed time for selecting the poles $-b_1, \ldots, -b_n$ as well as the interpolation points $x_1, \ldots, x_n$ in Algorithm \ref{alg:rEIM}, and $T_{\rm online}$ is typically a short time for implementing each $\Pi_nf_j$ (amounting to 1-2 matrix-vector multiplication of order $n$). Therefore, the computational complexity of the rEIM is dominated by $O(Jn^2)$ when $J\gg1$. }
For any input target function $f$, the rEIM interpolant $\Pi_nf$ has a fixed set of poles $-b_1, \ldots, -b_n$. This feature improves the efficiency of the rEIM-based numerical solvers, see section \ref{sec:application} for details.

{At the $m$-th step in Algorithm \ref{alg:rEIM}, the rEIM computes coefficients $\{c_i\}_{1\leq i\leq m}$ in $\Pi_mf=\sum_{i=1}^m\frac{c_i}{x+b_i}$ by interpolation conditions $(\Pi_mf)(x_i)=f(x_i)$,  $i=1, 2, \ldots, m$. An alternative way is determining $\{c_i\}_{1\leq i\leq m}$ by a least-squares problem:
\begin{equation*}
(c_1,\ldots,c_m)=\arg\min_{(\tilde{c}_1,\ldots,\tilde{c}_m)\in\mathbb{R}^m}\sum_{X\in\Sigma}\left|f(X)-\sum_{i=1}^m\frac{\tilde{c}_i}{X+b_i}\right|^2,
\end{equation*}
which is equivalent to $\min_{\mathbf{c}\in\mathbb{R}^m}\|\mathbb{A}_m\mathbf{c}-\mathbf{F}\|_{\ell^2}$, where  $\mathbb{A}_m\in\mathbb{R}^{K\times m}$, $\mathbf{F}=(f(X))_{X\in\Sigma}\in\mathbb{R}^K$, and $K=\#\Sigma\gg m$ is the number of sample points in $\Sigma$. As explained in \eqref{Gm}, the interpolatory approach could be implemented by solving a small-scale linear system of equations $\mathbb{G}_m\mathbf{c}=\mathbf{f}$ with $\mathbb{G}_m\in\mathbb{R}^{m\times m}$ and $\mathbf{f}=(f(x_1),\ldots,f(x_m))^\top\in\mathbb{R}^m$, which is cheaper than computing the least-squares solution of $\min_{\mathbf{c}\in\mathbb{R}^m}\|\mathbb{A}_m\mathbf{c}-\mathbf{F}\|_{\ell^2}$. The greedy selection of interpolation points plays a crucial role in the stabilization of the proposed rEIM as well as classical EIMs (cf.~\cite{barrault2004empirical,ChaturantabutSorensen2010}) for model reduction. }

{\begin{remark}
Recently, several variants of the AAA algorithm have been developed for simultaneously approximating a set $\{f_j\}_{1\leq j\leq J}$ of target functions, see, e.g., \cite{monzon2020multi,monzon2023Iterative,Rodriguez2023pAAA}. The output of those algorithms is another set $\{\tilde{f}_j\}_{1\leq j\leq J}$ of rational functions  in the barycentric form:
\begin{equation*}
\tilde{f}_j(z)=\frac{\sum_{i=1}^n\frac{w_{i}f_j(z_{i})}{z-z_{i}}}{\sum_{i=1}^n\frac{w_{i}}{z-z_{i}}}\approx f_j(z),
\end{equation*}
where $\{z_{i}\}_{1\leq i\leq n}$ are interpolation points and $\{w_{i}\}_{1\leq i\leq n}$ are corresponding weights. When deriving efficient numerical methods for PDEs by rational approximation algorithms, we need to rewrite $\tilde{f}_j$ as a partial fraction. However, converting $\tilde{f}_j$ into a partial fraction of the form \eqref{partialfraction} leads to loss of accuracy, see section \ref{subsec:xalpha} for numerical examples. 
\end{remark}}

\begin{remark}{Recently, the lightening method (cf.~\cite{Solving2019Trefethen,GopalTrefethen2019PNAS}) has been developed for computing highly accurate numerical solutions of Laplace and Helmholtz equations on a planar domain $\Omega\subset\mathbb{R}^2$ with low computational complexity. For example, the lightening method approximates the solution of a Laplace equation by $p+\sum_ir_i$, where $p$ is a polynomial and each $r_i$ is a partial fraction along the bisector of the $i$-th corner of $\Omega$. The location of poles of each $r_i$ is a priori set to satisfy an exponential  clustering distribution towards zero.}

{The approach in lightening methods  essentially approximates a fixed corner singularity $x^s$ ($s>0$) using ansatz $\sum_{j=0}^md_jx^j+\sum_{i=1}^n\frac{c_i}{x+b_i}$, while one of our main interests is to produce partial fraction approximants $\sum_{i=1}^n\frac{c_i}{x+b_i}$ for a family of  functions like $x^{-s}$. Unfortunately, we are not able to construct such rational approximations with high accuracy by simply a priori fixing poles $\{-b_i\}_{1\leq i\leq n}$ and determining $\{c_i\}_{1\leq i\leq n}$ by least-squares fitting. In addition, although the approach in \cite{Solving2019Trefethen,GopalTrefethen2019PNAS} is quite efficient for functions with a positive power-type singularity, the rEIM as well as other rational approximation algorithms are able to produce accurate rational approximants for more general functions without using much analytic information of the target function.}
\end{remark}

\subsection{Convergence Estimate of the REIM}\label{subsecConvergenceEIM}
Let $\widetilde{\mathcal{D}} \subset X$ be a bounded set of elements in a Banach
space. In particular, $X=L^\infty(I)$ in the analysis of the REIM.
The symmetric convex hull of $\widetilde{\mathcal{D}}$ is defined as
\begin{equation*}
B_1(\widetilde{\mathcal{D}})=\overline{\left\{\sum_{j=1}^m c_j g_j: m \in \mathbb{N},~g_j \in \widetilde{\mathcal{D}},~\sum_{i=1}^m\left|c_i\right| \leq 1\right\}}.
\end{equation*}
Using this set, the so-called variation norm (cf.~\cite{BarronCohenDahmenDeVore2008}) $\|\bullet\|_{\mathscr{L}_1(\widetilde{\mathcal{D}})}$ on $X$ is 
\begin{equation*}
\|f\|_{\mathscr{L}_1(\widetilde{\mathcal{D}})}=\inf \left\{c>0: f \in c B_1(\widetilde{\mathcal{D}})\right\},
\end{equation*}
and the subspace 
$\mathscr{L}_1(\widetilde{\mathcal{D}}):=\left\{f \in X:\|f\|_{\mathscr{L}_1(\widetilde{\mathcal{D}})}<\infty\right\}\subset X.$
The main convergence theorem of the proposed REIM is based on the entropy numbers (see  \cite{devore1993constructive}) of a set $F\subset X$:
\begin{equation*}
\varepsilon_n(F)=\varepsilon_n(F)_X=\inf \left\{\varepsilon>0: F\text{ is covered by } 2^n \text { balls of radius } \varepsilon\text{ in }X\right\} .\\
\end{equation*}

The sequence $\{\varepsilon_n(F)\}_{n\geq0}$ converges to 0 for any compact set $F$. We remark that classical literature {relates} the error of EIM-type algorithms  to the Kolmogorov $n$-width of $F$, see \cite{maday2016convergence}. Alternatively, we shall follow the  framework in \cite{LiSiegel2024,Li2024CGA} and derive an entropy-based convergence estimate of the REIM.
\begin{theorem}\label{thm:rEIMerror}
Let $L_n:=\sup _{0\neq g\in {\rm Span}\{\mathcal{D}\}} \frac{\left\|\Pi_n g\right\|_{{L^\infty(I)}}}{\|g\|_{{L^\infty(I)}}}$ and $S_n$ be the volume of the $n$-dimensional unit ball. For any $f\in \mathscr{L}_1(\mathcal{D})$, the rEIM (Algorithm \ref{alg:rEIM}) with $\mathcal{B}=(0,\infty)$ satisfies  
\begin{align*}
 &\left\|f-\Pi_{n-1}f\right\|_{L^\infty(I)}\\
 &\leq \left(1+L_{n-1}\right)\left(\prod_{k=1}^{n-1}\left(1+L_k\right)\right)^{\frac{1}{n}}\|f\|_{\mathscr{L}_1(\mathcal{D})}(nS_n)^\frac{1}{n}n \varepsilon_n(B_1(\mathcal{D}))_{L^\infty(I)}.    
\end{align*}
\end{theorem}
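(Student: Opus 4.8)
The plan is to bound the interpolation error by a greedy-approximation error and then estimate the latter using entropy numbers. The first step would be to invoke the standard EIM stability estimate: since $\Pi_{n-1}$ is an interpolation operator with Lebesgue constant $L_{n-1}$, for any $f$ and any $h\in\mathrm{Span}\{g(\bullet,b_1),\dots,g(\bullet,b_{n-1})\}$ one has $\|f-\Pi_{n-1}f\|_{L^\infty(I)}\le(1+L_{n-1})\|f-h\|_{L^\infty(I)}$, because $\Pi_{n-1}h=h$ and $\|\Pi_{n-1}(f-h)\|\le L_{n-1}\|f-h\|$. Taking the infimum over $h$ reduces the theorem to bounding the best approximation of $f$ from the greedily selected $(n-1)$-dimensional subspace.

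The second step is the heart of the matter and is where I expect to lean most heavily on the framework of \cite{Li2024CGA,LiSiegel2024}. Because the selection of $g_m$ in Algorithm \ref{alg:rEIM} is itself a greedy maximization of the residual over the whole dictionary $\mathcal{D}$, the sequence $g_1,\dots,g_{n-1}$ behaves like a greedy basis, and the best-approximation error of any $f\in\mathscr{L}_1(\mathcal{D})$ from their span can be controlled by the \emph{Kolmogorov-type} width of the greedy subspace. The key identity connecting greedy selection to entropy is that the volume of the parallelepiped spanned by the normalized residuals $q_1,\dots,q_{n-1}$ (equivalently a determinant of $\mathbb{Q}_{n-1}$ or $\mathbb{G}_{n-1}$) is, up to the Lebesgue-constant factors $\prod_k(1+L_k)$, bounded below by a power of the greedy error. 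One then compares this volume against the metric entropy of $B_1(\mathcal{D})$: a set covered by $2^n$ balls of radius $\varepsilon_n$ cannot contain a well-separated greedy sequence whose associated parallelepiped volume exceeds roughly $(nS_n)\,\varepsilon_n^n$, since $S_n\varepsilon_n^n$ is the volume of one covering ball and $2^n$ such balls constrain how large an inscribed simplex/parallelepiped can be.

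Concretely, I would assemble the estimate in the form
\begin{equation*}
\inf_{h}\|f-h\|_{L^\infty(I)}\le\|f\|_{\mathscr{L}_1(\mathcal{D})}\Bigl(\prod_{k=1}^{n-1}(1+L_k)\Bigr)^{\frac1n}(nS_n)^{\frac1n}\,n\,\varepsilon_n(B_1(\mathcal{D}))_{L^\infty(I)},
\end{equation*}
by normalizing: since $f/\|f\|_{\mathscr{L}_1(\mathcal{D})}\in B_1(\mathcal{D})$ it suffices to prove the bound for $f\in B_1(\mathcal{D})$, and then the greedy error is related to the $n$-th geometric mean of the residual norms, each of which dominates the greedy widths of $B_1(\mathcal{D})$. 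Multiplying this by the $(1+L_{n-1})$ factor from the first step yields the stated inequality. The main obstacle will be making the volume-versus-entropy comparison rigorous: I must carefully track how the greedy (rather than optimal) choice of basis inflates the volume bound by exactly $\prod_{k=1}^{n-1}(1+L_k)$ and confirm that the geometric-mean structure on the left matches the $1/n$ powers on the right. This amounts to reproducing, in the rEIM setting, the entropy-based greedy-width inequality of \cite{Li2024CGA}, and verifying that the dictionary normalization $\|g\|_{L^\infty(I)}=1$ and the admissibility $\mathcal{B}=(0,\infty)$ guarantee the requisite compactness of $B_1(\mathcal{D})$ so that $\varepsilon_n(B_1(\mathcal{D}))$ is finite and the covering argument applies.
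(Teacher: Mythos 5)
Your overall strategy could in principle be completed, but as written it has a genuine gap, and it bypasses the much shorter argument the paper actually uses. The paper's proof never reopens the entropy/volume machinery at all: it quotes the estimate \eqref{EIMerror} from \cite{Li2024CGA}, which bounds $\sup_{g\in\mathcal{D}}\|g-\Pi_{n-1}g\|_{L^\infty(I)}$ by exactly the constant appearing in the theorem, then writes $f=\sum_i c_i g_i$ with $g_i\in\mathcal{D}$ and $\sum_i|c_i|\le\|f\|_{\mathscr{L}_1(\mathcal{D})}$, and concludes by linearity of $\Pi_{n-1}$ and the triangle inequality, $\|f-\Pi_{n-1}f\|_{L^\infty(I)}\le\sum_i|c_i|\,\|g_i-\Pi_{n-1}g_i\|_{L^\infty(I)}$. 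That is the entire proof: the dictionary-level bound is a black box, and the only new content of the theorem is this convexity argument extending it from $\mathcal{D}$ to $\mathscr{L}_1(\mathcal{D})$.

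Your proposal inverts this order: quasi-optimality first, then a bound on $\inf_{h\in V_{n-1}}\|f-h\|_{L^\infty(I)}$, where $V_{n-1}={\rm Span}\{g(\bullet,b_1),\ldots,g(\bullet,b_{n-1})\}$. The gap is that this second step --- the volume-versus-entropy comparison with the precise $\prod_{k=1}^{n-1}(1+L_k)$ bookkeeping --- is exactly the hard content of \cite{Li2024CGA}, and you do not carry it out; you explicitly defer it as ``the main obstacle.'' So the proposal reduces the theorem to an unproven claim that is as deep as the theorem itself. Moreover, your chosen decomposition cannot be rescued by simply citing \eqref{EIMerror} at that point: since \eqref{EIMerror} bounds the \emph{interpolation} error and already contains a factor $(1+L_{n-1})$, inserting it after your quasi-optimality step would produce $(1+L_{n-1})^2$, overshooting the stated bound. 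To make your route work you would need the width-type inequality $\sup_{g\in\mathcal{D}}\inf_{h\in V_{n-1}}\|g-h\|_{L^\infty(I)}\le\bigl(\prod_{k=1}^{n-1}(1+L_k)\bigr)^{\frac1n}(nS_n)^{\frac1n}\,n\,\varepsilon_n(B_1(\mathcal{D}))_{L^\infty(I)}$, which sits \emph{inside} the proof in \cite{Li2024CGA} rather than being its quotable conclusion. By contrast, applying linearity of $\Pi_{n-1}$ directly to the atomic decomposition of $f$, as the paper does, uses \eqref{EIMerror} verbatim and finishes in two lines, with no risk of miscounting Lebesgue-constant factors.
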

\begin{proof}
We start with a convergence estimate of the EIM developed in \cite{Li2024CGA}:
    \begin{equation}\label{EIMerror}
    \begin{aligned}
 &\sup _{g \in \mathcal{D}}\left\|g-\Pi_{n-1} g\right\|_{L^\infty(I)}\\
 &\leq\left(1+L_{n-1}\right)\left(\prod_{k=1}^{n-1}\left(1+L_k\right)\right)^{\frac{1}{n}}(nS_n)^\frac{1}{n}n \varepsilon_n(B_1(\mathcal{D}))_{L^\infty(I)}.    \end{aligned}
\end{equation}
By the definition of $\mathscr{L}_1(\mathcal{D})$, we can write each $f\in \mathscr{L}_1(\mathcal{D})$ as $$f=\sum_ic_ig_i,\quad\sum_i|c_i|\leq\|f\|_{\mathscr{L}_1(\mathcal{D})}$$ 
with each $g_i\in\mathcal{D}$. It then follows from \eqref{EIMerror} that
\begin{equation*}
\begin{aligned}
&\left\|f-\Pi_{n-1} f\right\|_{L^\infty(I)}\leq\sum_i |c_i|\left\|g_i-\Pi_{n-1} g_i\right\|_{L^\infty(I)}\\
  &\qquad\leq 
  \left(1+L_{n-1}\right)\left(\prod_{k=1}^{n-1}\left(1+L_k\right)\right)^{\frac{1}{n}}(nS_n)^\frac{1}{n}n \|f\|_{\mathscr{L}_1(\mathcal{D})}\varepsilon_n(B_1(\mathcal{D}))_{L^\infty(I)}. 
\end{aligned}
\end{equation*}
The proof is complete.
\end{proof}

Combining Theorem \ref{thm:rEIMerror} with the order of convergence of $\varepsilon_n(B_1(\mathcal{D}))_{L^\infty(I)}$ in Corollary \ref{thmentropy} yields the following  convergence rate of the rEIM.
\begin{corollary}\label{REIMconvergencerate}
For any $f\in \mathscr{L}_1(\mathcal{D})$, there exists an absolute constant $\beta>0$ independent of $f$ and $n$ such that the rEIM with $\mathcal{B}=(0,\infty)$ satisfies
\begin{equation*}
 \left\|f-\Pi_{n-1}f\right\|_{L^\infty(I)}\lesssim\left(1+L_{n-1}\right)\left(\prod_{k=1}^{n-1}\left(1+L_k\right)\right)^{\frac{1}{n}}\|f\|_{\mathscr{L}_1(\mathcal{D})}\exp(-\beta n^\frac{1}{2}).    
\end{equation*}
\end{corollary}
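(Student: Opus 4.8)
The plan is to combine the error bound of Theorem~\ref{thm:rEIMerror} with the sub-exponential decay of the entropy numbers $\varepsilon_n(B_1(\mathcal{D}))_{L^\infty(I)}$ supplied by Corollary~\ref{thmentropy}, and then absorb the algebraic prefactors into the exponential. Since the factor $(1+L_{n-1})(\prod_{k=1}^{n-1}(1+L_k))^{1/n}\|f\|_{\mathscr{L}_1(\mathcal{D})}$ already appears verbatim in the target inequality, the only quantity I need to estimate is the remaining product $(nS_n)^{1/n}\,n\,\varepsilon_n(B_1(\mathcal{D}))_{L^\infty(I)}$.

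First I would control the volume prefactor. Writing $S_n=\pi^{n/2}/\Gamma(\tfrac n2+1)$ and applying Stirling's formula to $\Gamma(\tfrac n2+1)$ gives $S_n^{1/n}=\Theta(n^{-1/2})$; since $n^{1/n}\to1$ is uniformly bounded, it follows that $(nS_n)^{1/n}=n^{1/n}S_n^{1/n}\lesssim n^{-1/2}$ and therefore $(nS_n)^{1/n}\,n\lesssim n^{1/2}$. Thus the whole prefactor multiplying the entropy number grows at most polynomially, like $\sqrt n$.

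Next I would invoke Corollary~\ref{thmentropy}, which provides an absolute constant $c>0$ with $\varepsilon_n(B_1(\mathcal{D}))_{L^\infty(I)}\lesssim\exp(-c\,n^{1/2})$. Combining this with the previous step yields $(nS_n)^{1/n}\,n\,\varepsilon_n(B_1(\mathcal{D}))_{L^\infty(I)}\lesssim \sqrt n\,\exp(-c\,n^{1/2})$. It then remains to absorb the polynomial factor $\sqrt n$ into the sub-exponential: for any fixed $\beta\in(0,c)$ one has $\sqrt n\,\exp(-c\,n^{1/2})=\exp\!\big(-\beta n^{1/2}-(c-\beta)n^{1/2}+\tfrac12\log n\big)\lesssim\exp(-\beta n^{1/2})$, because $(c-\beta)n^{1/2}-\tfrac12\log n\to+\infty$ forces the extra factor to be uniformly bounded. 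Taking, say, $\beta=c/2$ gives the claimed rate with $\beta$ independent of both $f$ and $n$.

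I do not anticipate any genuine obstacle inside this corollary: the argument is a routine bookkeeping step once Theorem~\ref{thm:rEIMerror} and the entropy estimate are in hand. The only point requiring a little care is that the algebraic prefactor $(nS_n)^{1/n}n$ be shown to be merely polynomially large (via Stirling), so that it is swallowed by the exponential without spoiling the $n^{1/2}$ rate; the substantive work---establishing the sub-exponential entropy decay $\varepsilon_n(B_1(\mathcal{D}))_{L^\infty(I)}\lesssim\exp(-c\,n^{1/2})$ of Corollary~\ref{thmentropy}, which rests on the asymptotic analysis in Theorem~\ref{thmnwidth}---lies upstream and is treated separately.
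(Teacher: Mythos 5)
Your proposal is correct and follows exactly the route the paper intends: the paper gives no separate proof of this corollary, stating only that it follows by combining Theorem \ref{thm:rEIMerror} with the entropy bound of Corollary \ref{thmentropy}, which is precisely your argument. Your Stirling estimate $(nS_n)^{1/n}n\lesssim n^{1/2}$ and the absorption of this polynomial factor into $\exp(-\beta n^{1/2})$ for any $\beta<C_6$ correctly fill in the bookkeeping the paper leaves implicit.
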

In section \ref{sec:convergenceREIM}, we shall discuss the membership of special target functions such as $(x^{s}+k)^{-1}$ in  $\mathscr{L}_1(\mathcal{D})$ and the entropy numbers of $B_1(\mathcal{D})_{L^\infty(I)}$. In the worst-case scenario, the Lebesgue constant $L_{n}$ {could grow exponentially}, see \cite{maday2007general}. However, it is widely recognized that such a pessimistic phenomenon will not happen in practical applications (see \cite{barrault2004empirical}). We shall test the growth of $L_n$ in section \ref{subsec:xalpha}.

\subsection{Rational Orthogonal Greedy Algorithm}

Another dictionary-based rational approximation method is the Rational Orthogonal Greedy Algorithm (ROGA) based on a rational dictionary developed in \cite{LiZikatanovZuo2024}, a variant of the classical OGA (cf.~\cite{DeVoreTemlyakov1996}). That algorithm constructs a sparse $n$-term rational approximation $f_n =  \sum_{i=1}^{n}c_ig_i$ for $f\in L^2(I)$ based on the dictionary \eqref{rationaldictionary}, see Algorithm \ref{alg:ROGA}.

\begin{algorithm}[H]
  \caption{Rational Orthogonal Greedy Algorithm}\label{alg:ROGA}
  \begin{algorithmic}
    \STATE \textbf{Input:} an integer $n>0$,  a rational dictionary $\mathcal{D}(\mathcal{B})$ in $L^2(I)$; set $f_0=0$.

   \FOR{$m=1:n$}
\STATE  compute $$g_m=\arg \max _{g \in \mathcal{D}(\mathcal{B})}\big|\left\langle g, f-f_{m-1}\right\rangle_{L^2(I)}\big|;$$
\STATE compute the $L^2$ orthogonal projection $f_m$ of 
$f$ onto ${\rm Span}\{g_1,g_2,...,g_m\}$;
   \ENDFOR
  \end{algorithmic}  
\end{algorithm}

Convergence of the OGA has been investigated in e.g., \cite{DeVoreTemlyakov1996,Temlyakov2018,LiSiegel2024}. In particular, \cite{LiSiegel2024} derives a sharp convergence estimate of the OGA based on the entropy numbers: 
\begin{equation*}\label{convergerateOGA}
\left\|f-f_n\right\|_{L^2(I)} \leq \frac{\left(n ! S_n\right)^{\frac{1}{n}}}{\sqrt{n}}\|f\|_{\mathscr{L}_1(\mathcal{D})} \varepsilon_n(B_1(\mathcal{D}))_{L^2(I)}.
\end{equation*}   
It then follows from the above estimate and the bound of $\varepsilon_n(B_1(\mathcal{D}))_{L^2(I)}$ in Corollary  \ref{thmentropy}  that the rational OGA is exponentially convergent.
\begin{corollary}\label{ROGAconvergencerate}
For the rational OGA (Algorithm \ref{alg:ROGA}) with $\mathcal{B}=(0,\infty)$ and $f\in\mathscr{L}_1(\mathcal{D})$, there exists an absolute constant $\gamma>0$ independent of $f$ and $n$ such that
\begin{equation*}
 \left\|f-f_n\right\|_{L^2(I)}\lesssim\|f\|_{\mathscr{L}_1(\mathcal{D})}\exp(-\gamma n^\frac{1}{2}).    
\end{equation*}
\end{corollary}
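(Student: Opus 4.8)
The plan is to read off the result by combining the two ingredients already displayed immediately before the statement. The first is the sharp entropy-based error bound for the OGA from \cite{LiSiegel2024},
\[
\left\|f-f_n\right\|_{L^2(I)} \leq \frac{\left(n!\,S_n\right)^{1/n}}{\sqrt{n}}\,\|f\|_{\mathscr{L}_1(\mathcal{D})}\,\varepsilon_n(B_1(\mathcal{D}))_{L^2(I)},
\]
and the second is the sub-exponential decay of the entropy numbers furnished by Corollary \ref{thmentropy}. Since the target norm $\|f\|_{\mathscr{L}_1(\mathcal{D})}$ already appears as a multiplicative factor and is finite by the hypothesis $f\in\mathscr{L}_1(\mathcal{D})$, the entire task reduces to controlling the dimension-dependent prefactor $(n!\,S_n)^{1/n}/\sqrt{n}$ and then merging it with the exponential rate.

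First I would establish that this prefactor is bounded by an absolute constant. Writing $S_n=\pi^{n/2}/\Gamma(n/2+1)$ and applying Stirling's formula $\Gamma(t+1)\sim\sqrt{2\pi t}\,(t/e)^t$, one finds $(n!)^{1/n}\sim n/e$ and $S_n^{1/n}\sim\sqrt{2\pi e/n}$, whence
\[
\frac{\left(n!\,S_n\right)^{1/n}}{\sqrt{n}}=\frac{(n!)^{1/n}\,S_n^{1/n}}{\sqrt{n}}\longrightarrow\sqrt{\frac{2\pi}{e}}\qquad(n\to\infty).
\]
In particular there is an absolute constant $C_0>0$, independent of $n$ and $f$, with $(n!\,S_n)^{1/n}/\sqrt{n}\leq C_0$ for every $n\geq 1$. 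I would then invoke Corollary \ref{thmentropy} to fix an absolute constant $c>0$ such that $\varepsilon_n(B_1(\mathcal{D}))_{L^2(I)}\lesssim\exp(-c\,n^{1/2})$, and substitute both bounds into the OGA estimate to obtain
\[
\left\|f-f_n\right\|_{L^2(I)}\leq C_0\,\|f\|_{\mathscr{L}_1(\mathcal{D})}\,\varepsilon_n(B_1(\mathcal{D}))_{L^2(I)}\lesssim\|f\|_{\mathscr{L}_1(\mathcal{D})}\,\exp(-c\,n^{1/2}),
\]
so that the claim holds with $\gamma=c$.

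The argument is short by design: the analytical difficulty has been front-loaded into Corollary \ref{thmentropy} (equivalently Theorem \ref{thmnwidth}), the sub-exponential estimate of the metric entropy numbers of the symmetric convex hull of the rational dictionary, which I would regard as the genuine obstacle for the overall theory. Within the corollary itself the only point deserving care is the behaviour of $(n!\,S_n)^{1/n}/\sqrt{n}$; I expect it to converge to the constant $\sqrt{2\pi/e}$, but I would note that even mild (e.g.\ polynomial) growth of this factor would be harmless, since any $n^{p}$ is absorbed into $\exp(-c\,n^{1/2})$ at the cost of replacing $\gamma=c$ by a slightly smaller constant. Beyond this Stirling computation I anticipate no further difficulty.
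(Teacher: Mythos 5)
Your proposal is correct and follows essentially the same route as the paper, which likewise obtains the corollary by inserting the entropy bound of Corollary \ref{thmentropy} (with $p=2$) into the OGA estimate of \cite{LiSiegel2024}. The only difference is that you spell out the Stirling computation showing $(n!\,S_n)^{1/n}/\sqrt{n}\to\sqrt{2\pi/e}$, a step the paper leaves implicit; your fallback remark that even polynomial growth of this prefactor would be absorbed into $\exp(-c\,n^{1/2})$ is also sound.
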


\section{Applications of the rEIM}\label{sec:application}
\subsection{Fractional Order PDEs}\label{subsec:Fractional}
Under the homogeneous Dirichlet boundary condition, a fractional order PDE of order $s \in(0,1)$ is 
\begin{equation}\label{problemFL}
\begin{aligned}
    \mathcal{A}^s u&=f\quad\text{ in }\Omega,\\
    u&=0\quad\text{ on }\partial\Omega,
\end{aligned}
\end{equation}
where $\mathcal{A}$ is a SPD compact operator. Let $0<\lambda_1\leq\lambda_2\leq\lambda_3\leq\cdots$ be the eigenvalues of $\mathcal{A}$ and $u_1$, $u_2$, $u_3$, $...$ the associated orthonormal eigenfunctions under the $L^2(\Omega)$-inner product $(\bullet,\bullet)$. The spectral {fractional} power of $\mathcal{A}$ is defined by
\begin{equation}\label{definefraction}
 \mathcal{A}^su = \sum_{i=1}^{\infty} \lambda_i^s(u,u_i) u_i. 
\end{equation}
When $\mathcal{A}=-\Delta$ is the negative Laplacian, \eqref{problemFL} reduces to the spectral fractional Poisson equation. For $\mathcal{A}u=-\nabla\cdot(a\nabla u)+cu$, $\mathcal{A}^s$ describes a fractional diffusion process. 

Direct discretizations such as the Finite Difference Method (FDM) and Finite Element Method (FEM) for \eqref{problemFL} lead to  dense linear systems. To remedy this situation, quadrature formulas and rational approximation algorithms are introduced in, e.g., \cite{harizanov2018optimal,harizanov2019comparison,hofreither2020unified} to approximate the solution $u$ of \eqref{problemFL} using a linear combination of numerical solutions of several shifted integer-order problems. Let $\mathcal{A}_h: \mathcal{V}_h\rightarrow\mathcal{V}_h$ be a discretization of $\mathcal{A}$ with maximum and minimum eigenvalues $\lambda_{\max}=\lambda_{\max}(\mathcal{A}_h)$ and $\lambda_{\min}=\lambda_{\min}(\mathcal{A}_h)$. Assume the output $r_n$ of the rEIM  is an accurate approximation of $x^{-s}$ over $[\lambda_{\min},\lambda_{\max}]$:
\begin{equation}\label{rn}
    r_n(x) = \sum_{i=1}^{n} \frac{c_i}{x+b_i}\approx x^{-s},\quad \lambda_{\min}\leq x\leq\lambda_{\max}.
\end{equation}
Let $\mathcal{I}$ be the identity operator. By observing $u=\mathcal{A}^{-s}f\approx\sum_{i=1}^nc_i(\mathcal{A}+b_i\mathcal{I})^{-1}f$ and using $r_n$ in \eqref{rn}, 
\begin{equation}\label{rationalPDEsolution}
    u_h:=\sum_{i=1}^nc_i(\mathcal{A}_h+b_i\mathcal{I}_h)^{-1}f_h
\end{equation}
is a numerical solution of \eqref{problemFL} with $\mathcal{I}_h: \mathcal{V}_h\rightarrow\mathcal{V}_h$ being the identity operator on the discrete level. The error $\|u-u_h\|_{L^2(\Omega)}$ is determined by the accuracy of rational approximation $\max_{x\in[\lambda_{\min},\lambda_{\max}]}|x^{-s}-r_n(x)|$ (see \cite{hofreither2020unified}). The evaluation of $u_h$ is equivalent to solving a series of SPD integer-order elliptic problems \begin{equation}\label{integerproblem}
(\mathcal{A}_h+b_i\mathcal{I}_h)u_h^i=f_h,\quad i=1, \ldots, n.  
\end{equation}

When using the FDM, $\mathcal{A}_h$ is the finite difference matrix, $f_h$ is a vector recording the values of $f$ at grid points, and $\mathcal{V}_h=\mathbb{R}^N$. In the setting of FEMs, $f_h$ is the $L^2$ projection of $f$ onto a finite element subspace $\mathcal{V}_h\subset H_0^1(\Omega)$ and $\mathcal{A}_h$ is represented by the matrix $\mathbb{M}^{-1}\mathbb{A}$, where $\mathbb{A}$ and $\mathbb{M}$ are the FEM stiffness and mass matrices, respectively. With a basis $\{\phi_i\}_{1\leq i\leq N}$ of $\mathcal{V}_h$, the solution \eqref{rationalPDEsolution} is $u_h=(\phi_1,\ldots,\phi_N)\mathbf{u}$ with  \begin{equation}\label{problemdiscrete}
\mathbf{u}=\sum_{i=1}^nc_i(\mathbb{A}+b_i\mathbb{M})^{-1}\mathbf{f},
\end{equation}
where $\mathbb{A}=(\nabla\phi_j,\nabla\phi_i)_{1\leq i,j\leq N}$,  $\mathbb{M}=(\phi_j,\phi_i)_{1\leq i,j\leq N}$, and $\mathbf{f}=((f,\phi_1),\ldots,(f,\phi_N))^\top$. It is noted that $\{b_i\}_{1\leq i\leq n}$ remains the same for different values of fractional order $s$. As a consequence, it suffices to pre-compute solvers, e.g., multi-frontal LU factorization, multigrid prolongations, for each $\mathbb{A}+b_i\mathbb{M}$ one time to efficiently solve $(-\Delta)^s u=f$ with a large number of input fractional order $s$.

\subsection{Evolution Fractional PDEs}\label{subsec:parabolic}
Another natural application of the rEIM is numerically solving the space-fractional parabolic equation 
\begin{equation}\label{heatFL}
\begin{aligned}
    u_t+\mathcal{A}^s u&=f, \quad\text{ on }(0,T]\times\Omega,\\
 u(0,\bullet)&=u_0\quad\text{ on }\Omega
\end{aligned}
\end{equation}
under the homogeneous Dirichlet boundary condition. Given the temporal grid $0=t_0<t_1<\cdots<t_M=T$ with $t_m=m\tau, \tau=T/M$, the semi-discrete scheme for \eqref{heatFL} based on the implicit Euler method reads
\begin{equation*}
 \left(\frac{1}{\tau} \mathcal{I}+\mathcal{A}^s\right)u^m=\frac{1}{\tau}u^{m-1}+f^m, \quad m=1, \ldots, M,   
\end{equation*}
where $u^m$ is an approximation of $u(t_m,\bullet)$. 
Following the same idea in section \ref{subsec:Fractional}, we use the rEIM to construct a rational function $\sum_{i=1}^n\frac{c_i}{x+b_i}$ approximating $(x^{s}+1/\tau)^{-1}$ over $[\lambda_{\min},\lambda_{\max}]$. The resulting fully discrete scheme is
\begin{align*}
u_h^m=\sum_{i=1}^nc_i(\mathcal{A}_h+b_i\mathcal{I}_h)^{-1}\left(\frac{1}{\tau}u_h^{m-1}+f_h^m\right), \quad m=1, \ldots, M,
\end{align*}
where $u_h^m\approx u(t_i,\bullet)$ and the meanings of $\mathcal{A}_h\approx \mathcal{A}$  and $f_h^m\approx f(t_i,\bullet)$ are explained in section \ref{subsec:Fractional}.

\subsection{Adaptive Step-Size Control}\label{subsec:adaptive}
Next we consider a non-uniform temporal grid $0=t_0<t_1<\cdots<t_N=T$ with $t_m=t_{m-1}+\tau_m$ and variable step-size control of each $\tau_m$. The numerical scheme for \eqref{heatFL} is as follows:
\begin{equation}\label{Euler}
u_h^m=\sum_{i=1}^nc_{i,m}(\mathcal{A}_h+b_i\mathcal{I}_h)^{-1}\left(\frac{1}{\tau_m}u_h^{m-1}+f_h^m\right), \quad m=1, 2, \ldots,
\end{equation}
where $\sum_{i=1}^n\frac{c_{i,m}}{x+b_i}$ is the rEIM interpolant of $(x^{s}+1/\tau_m)^{-1}$. We remark that $\tau_m$ might change at each time-level and the rEIM is {suitable} for generating rational approximants for a large number of $\tau_m$. Moreover, $b_1, \ldots, b_n$ in the rEIM remain the same for different values of $\tau_m$. This feature enables efficient computation of $u_h^m$ by inverting a fixed set of operators $\mathcal{A}_h+b_i\mathcal{I}_h$ independent of the variable step-size, saving computational cost for the same reason explained in section \ref{subsec:Fractional}. {Due to the invariance of $\{b_i\}_{1\leq i\leq n}$, a series of time-independent solvers $\{\mathcal{B}_i\}_{1\leq i\leq n}$ with $\mathcal{B}_i\approx(\mathcal{A}_h+b_i\mathcal{I}_h)^{-1}$ could be set up at the initial stage and reused at subsequent time levels for any step size $\tau_m$. As the time level $m$ varies, it suffices to apply $\{\mathcal{B}_i\}_{1\leq i\leq n}$ to the right side vector $\frac{1}{\tau_m}u_h^{m-1}+f_h^m$ in \eqref{Euler}. The coefficients $\{c_{i,m}\}_{1\leq i\leq n}$ are obtained by interpolating $(x^s+1/\tau_m)^{-1}$ using the rEIM only with cost $O(n^2)$, which is crucial for saving the cost of adaptive step size control based on solving \eqref{Euler} with many tentative step sizes $\tau_m$.} In particular, the advantage of invariant $\{b_i\}_{i=1}^n$ is also true for  integer-order parabolic equations. 







We use the numerical solution $\tilde{u}_h^m$ computed by a higher-order method, e.g., the BDF2 method as the reference solution and use $err_m:=\|u_h^m-\tilde{u}_h^m\|_{L^2(\Omega)}$ to predict the local error of \eqref{Euler} at each time level $t_{m+1}$ and adjust the step size $\tau_m$, see section \ref{expfractionalheat} for implementation details. 
The variable step-size BDF2  (cf.~\cite{JANNELLI2006Adaptive}) makes use of the backward finite difference formula $v^\prime(t_{m+1})\approx\kappa_{1,m}v(t_{m+1})+\kappa_{0,m}v(t_m)+\kappa_{-1,m}v(t_{m-1})$ with 
\begin{equation*}
  \kappa_{1,m}=\frac{2 \tau_m+\tau_{m-1}}{\tau_m\left(\tau_m+\tau_{m-1}\right)},\quad\kappa_{0,m}=-\frac{\tau_m+\tau_{m-1}}{\tau_{m-1} \tau_m},\quad\kappa_{-1,m}=\frac{\tau_m}{\tau_{m-1}\left(\tau_m+\tau_{m-1}\right)}.  
\end{equation*}
The fully discrete reference solution is computed as follows:
\begin{equation*}
\tilde{u}_h^{m+1}=\sum_{i=1}^n\tilde{c}_{i,m}(\mathcal{A}_h+b_i\mathcal{I}_h)^{-1}\left( f_h^{m+1}-\kappa_{0,m}u^m-\kappa_{-1,m}u^{m-1}\right), \quad m=1, 2, \ldots,
\end{equation*}
where $\sum_{i=1}^n\frac{\tilde{c}_{i,m}}{x+b_i}$ is the rEIM interpolant of $(x^s+\kappa_{1,m})^{-1}$. Then the rEIM for the family $\{(x^s+1/\tau_m)^{-1}\}_{m\geq1}$ used in \eqref{Euler} is also able to simultaneously generate rational approximation of $(x^s+\kappa_{1,m})^{-1}$ with little extra effort. The coefficients $\{b_i\}_{i=1}^n$ are the same as in \eqref{Euler}. 

\subsection{Preconditioning}\label{subsec:prec}
Recently, it has been shown in \cite{HolterKuchtaMardal2021,BoonHornkjolKuchtaMardalRuiz2022,HarizanovLirkovMargenov2022,BudisaHu2023} that fractional order operators are crucial in the design of parameter-robust preconditioners for complex multi-physics systems. For example, when solving a discretized Darcy--Stokes interface problem, the following theoretical block diagonal preconditioner (cf.~\cite{BudisaHu2023})
\begin{equation*}
\mathcal{B}_h:=\operatorname{diag}\left(\mathcal{A}_S, \mu K^{-1}(\mathcal{I}_h-\nabla_h\nabla_h\cdot), \mu^{-1}\mathcal{I}_h, K \mu^{-1} \mathcal{I}_h, \mathcal{S}_h\right)^{-1} 
\end{equation*}
is an efficient solver robust with respect to the viscosity $\mu>0$ and the permeability $K>0$, see \cite{BudisaHu2023} for details. The {$(5,5)$}-block of $\mathcal{B}_h$ is $$\mathcal{S}_h:=\mu^{-1}\mathcal{A}_{\Gamma,h}^{-1 /2}+K \mu^{-1}\mathcal{A}_{\Gamma,h}^{1 / 2},$$
where $\mathcal{A}_{\Gamma,h}$ is a discretization of the shifted Laplacian $-\Delta_\Gamma+\mathcal{I}_\Gamma$ on the Darcy--Stokes interface $\Gamma$. Over the interval $[\lambda_{\min}(\mathcal{A}_{\Gamma,h}),\lambda_{\max}(\mathcal{A}_{\Gamma,h})]$, applying the rEIM to $f(x)=(\mu^{-1} x^{-\frac{1}{2}}+K\mu^{-1} x^\frac{1}{2})^{-1}$ yields a rational interpolant $\sum_{i=1}^n\frac{c_i}{x+b_i}$ to $f$ and a spectrally equivalent operator
\begin{align*}
    \sum_{i=1}^nc_i(\mathcal{A}_{\Gamma,h}+b_i\mathcal{I}_{\Gamma,h})^{-1},
\end{align*}
serving as a  {$(5,5)$}-block in the practical form of the parameter-robust preconditioner. The rEIM-based preconditioner is {particularly} suitable for solving a series of multi-physics systems because $\{b_i\}_{i=1}^n$ are independent of the physical parameters $\mu, K$ and an approximate inverse, e.g., algeraic multigrid, of $\mathcal{A}_{\Gamma,h}+b_i\mathcal{I}_{\Gamma,h}$ could be re-used for different values of $\mu$ and $K$. 

\subsection{Approximation of Matrix Exponentials}\label{subsec:exp} The last example is the stiff or highly oscillotary system of ordinary differential equations 
\begin{equation}\label{ODE}
\mathbf{u}^\prime+\mathbb{L}\mathbf{u}=\mathbf{f}(\mathbf{u})    
\end{equation}
with $\mathbb{L}\in\mathbb{R}^{N\times N}$ and $\mathbb{L}\mathbf{u}$ being a dominating linear term. When numerically solving \eqref{ODE},
exponential integrators often exhibit superior stability and accuracy (cf.~\cite{HochbruckOstermann2010,LiWu2016SISC}). For example, the simplest exponential integrator for \eqref{ODE} is the following exponential Euler method:  
\begin{align*}
    \mathbf{u}_{m+1}=\exp(-\tau_m \mathbb{L})\mathbf{u}_m+\varphi(-\tau_m\mathbb{L})\mathbf{f}(\mathbf{u}_m),
\end{align*}
where $\tau_m$ is the step size at time $t_m$, $\varphi(x)=(\exp(x)-1)/x$ and $\mathbf{u}(t_m)\approx\mathbf{u}_m$.
Interested readers are referred to  \cite{HochbruckOstermann2010} for more advanced  exponential integrators. In practice, $\mathbb{L}$ is often a large and sparse symmetric positive semi-definite matrix, e.g., when \eqref{ODE} arises from semi-discretization of PDEs, and it is impossible to directly compute $\exp(-\tau_m \mathbb{L})$, $\varphi(-\tau_m\mathbb{L})$. In this case, iterative methods (cf.~\cite{HochbruckLubich1997,Higham2008}) are employed to approximate the matrix-vector products $\exp(-\tau_m\mathbb{L})\mathbf{v}$, $\varphi(-\tau_m\mathbb{L})\mathbf{v}$. An alternative way is to interpolate $\exp(-\tau_mx)$ and $\varphi(-\tau_mx)$ by the rEIM on $[\lambda_{\min}(\mathbb{L}),\lambda_{\max}(\mathbb{L})]$:
\begin{equation}\label{expphiapprox}
\begin{aligned}
        \exp(-\tau_mx) \approx r_n(x)&= \sum_{i=1}^n\frac{c_i}{x+b_i},\\
        \varphi(-\tau_mx) \approx \tilde{r}_n(x)&= \sum_{i=1}^n\frac{\tilde{c}_i}{x+b_i}.
\end{aligned}
\end{equation}
Then the rEIM-based approximation of $\exp(-\tau_m\mathbb{L})\mathbf{v}$ and $\varphi(-\tau_m\mathbb{L})\mathbf{v}$ is 
\begin{align*}
    \exp(-\tau_m\mathbb{L})\mathbf{v}&\approx \sum_{i=1}^n c_i(\mathbb{L}+b_i\mathbb{I})^{-1}\mathbf{v},\\
    \varphi(-\tau_m\mathbb{L})\mathbf{v}&\approx \sum_{i=1}^n \tilde{c}_i(\mathbb{L}+b_i\mathbb{I})^{-1}\mathbf{v},
\end{align*}
where $\mathbb{I}$ is the identity matrix.
It is also necessary to evaluate extra matrix functions $\varphi_2(-\tau_m\mathbb{L})$, $\varphi_3(-\tau_m\mathbb{L}), \ldots$ in higher-order exponential integrators. As mentioned before, the rEIM ensures that rational approximants of those functions share the same set of poles $-b_1, \ldots, -b_n$, which implies that only a fixed series of matrix inverse action $\{(\mathbb{L}+b_i\mathbb{I})^{-1}\mathbf{v}\}_{1\leq i\leq n}$ are needed regardless of the number of matrix functions and the value of $\tau_m$.

\section{Convergence Analysis}\label{sec:convergenceREIM}
In this section we analyze special cases of the membership of $\mathscr{L}_1(\mathcal{D})$ and the decay rate of the entropy numbers of the dictionary $\mathcal{D}$  in \eqref{rationaldictionary} over $I = [\eta,1]$. 

\subsection{Variation Norm of Functions}
\begin{lemma}\label{lemmaL1}
Let $\mathcal{W}\subseteq\mathbb{R}^m$ be a domain and  $\widetilde{\mathcal{D}} = \{\tilde{g}(\bullet,\omega): \omega \in \mathcal{W}\}$ on some interval $\tilde{I}$. Assume that $\tilde{g}$ is uniformly continuous on $\tilde{I}\times\mathcal{W}$. If a function $f$ could be written as
\begin{equation}\label{thrL11}
f(x) = \int_\mathcal{W} h(\omega)\tilde{g}(x,\omega)d\omega,    
\end{equation}
where $h$ satisfies $\int_\mathcal{W} |h(\omega)|d\omega <\infty$, then $f \in \mathscr{L}_1(\widetilde{\mathcal{D}})$.
\end{lemma}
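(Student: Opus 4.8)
The goal is to show that if $f(x) = \int_{\mathcal{W}} h(\omega)\tilde{g}(x,\omega)\,d\omega$ with $h$ absolutely integrable, then $f \in \mathscr{L}_1(\widetilde{\mathcal{D}})$, i.e.\ $f$ lies in a finite multiple of the closed symmetric convex hull $B_1(\widetilde{\mathcal{D}})$. The natural strategy is to realize the integral as a limit of Riemann sums and to recognize each Riemann sum as a scaled element of $B_1(\widetilde{\mathcal{D}})$. First I would set $M := \int_{\mathcal{W}} |h(\omega)|\,d\omega < \infty$ and aim to prove that $f \in M\cdot B_1(\widetilde{\mathcal{D}})$, which gives $\|f\|_{\mathscr{L}_1(\widetilde{\mathcal{D}})} \le M < \infty$ and hence $f \in \mathscr{L}_1(\widetilde{\mathcal{D}})$.

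The core construction is to partition the parameter domain $\mathcal{W}$ into measurable cells $\{\mathcal{W}_k\}_{k=1}^{K}$ of small diameter, pick a sample point $\omega_k \in \mathcal{W}_k$, and form the discrete approximant
\begin{equation*}
f_K(x) = \sum_{k=1}^{K} c_k\, \tilde{g}(x,\omega_k), \qquad c_k = \int_{\mathcal{W}_k} h(\omega)\,d\omega.
\end{equation*}
Then $\sum_{k=1}^{K} |c_k| \le \int_{\mathcal{W}} |h(\omega)|\,d\omega = M$, so each $f_K \in M\cdot B_1(\widetilde{\mathcal{D}})$ (after normalizing the coefficients by $M$, the sum of absolute values is at most $1$). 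The remaining task is to show $f_K \to f$ in the norm of $X = L^\infty(\tilde{I})$, so that $f$ lies in the \emph{closure} $M\cdot B_1(\widetilde{\mathcal{D}})$, which is exactly how $B_1$ is defined in the excerpt.

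For the convergence $\|f - f_K\|_{L^\infty(\tilde{I})} \to 0$, I would write
\begin{equation*}
f(x) - f_K(x) = \sum_{k=1}^{K} \int_{\mathcal{W}_k} h(\omega)\bigl(\tilde{g}(x,\omega) - \tilde{g}(x,\omega_k)\bigr)\,d\omega
\end{equation*}
and bound it uniformly in $x$ using the \emph{uniform continuity} hypothesis on $\tilde{g}$ over $\tilde{I}\times\mathcal{W}$. Given $\varepsilon > 0$, uniform continuity supplies a $\delta > 0$ such that $|\tilde{g}(x,\omega) - \tilde{g}(x,\omega')| < \varepsilon$ whenever $|\omega - \omega'| < \delta$, uniformly in $x$; choosing the partition fine enough that each cell has diameter below $\delta$ then gives $\|f - f_K\|_{L^\infty(\tilde{I})} \le \varepsilon \int_{\mathcal{W}} |h(\omega)|\,d\omega = \varepsilon M$, which tends to $0$.

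The main obstacle is that $\mathcal{W}$ may be unbounded (as indeed $\mathcal{W} = (0,\infty)$ in the application to $\mathcal{D}$), in which case uniform continuity on the whole of $\tilde{I}\times\mathcal{W}$ together with a finite partition into small-diameter cells is not immediately available. I would handle this by a truncation argument: since $\int_{\mathcal{W}}|h| < \infty$, for any $\varepsilon$ there is a bounded subset $\mathcal{W}_0 \subset \mathcal{W}$ with $\int_{\mathcal{W}\setminus\mathcal{W}_0}|h| < \varepsilon$; I would apply the Riemann-sum argument on the compact region $\mathcal{W}_0$ (where uniform continuity yields a finite admissible partition) and absorb the tail $\int_{\mathcal{W}\setminus\mathcal{W}_0} h(\omega)\tilde{g}(x,\omega)\,d\omega$ into the error, using $\|\tilde{g}(\bullet,\omega)\|_{L^\infty(\tilde{I})} \le 1$ from the normalization of the dictionary. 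Combining the truncation error and the Riemann-sum error, both controlled by $\varepsilon$, completes the proof that $f$ lies in the closure of $M\cdot B_1(\widetilde{\mathcal{D}})$.
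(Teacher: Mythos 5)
Your proof is correct and follows essentially the same route as the paper's: discretize the parameter integral into a Riemann-type sum over a fine partition of $\mathcal{W}$, observe that the resulting combination has coefficient $\ell^1$-mass controlled by $\int_{\mathcal{W}}|h|$, and use uniform continuity of $\tilde{g}$ to get convergence in $L^\infty(\tilde{I})$, so that $f$ lies in the closed set $M\cdot B_1(\widetilde{\mathcal{D}})$.

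Two of your choices are in fact more careful than the paper's own argument. First, the paper samples $h$ pointwise, using coefficients $m(W_i)h(\omega_i)$; this implicitly requires Riemann-sum convergence for the integrand (delicate for a merely Lebesgue-integrable $h$) and costs a factor of $2$, giving $\|f\|_{\mathscr{L}_1(\widetilde{\mathcal{D}})}\leq 2\int_{\mathcal{W}}|h|$, whereas your cell-averaged coefficients $c_k=\int_{\mathcal{W}_k}h$ work for any $h\in L^1(\mathcal{W})$ and yield the sharper bound $\|f\|_{\mathscr{L}_1(\widetilde{\mathcal{D}})}\leq\int_{\mathcal{W}}|h|$. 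Second, you explicitly treat unbounded $\mathcal{W}$ (the relevant case, since $\mathcal{W}=(0,\infty)$ for the rational dictionary) by truncating to a bounded region carrying most of $\int|h|$ and absorbing the tail; the paper glosses over this, working with countable partitions whose cells have small \emph{measure} rather than small diameter, even though small measure alone does not make $\tilde{g}(x,\omega)$ close to $\tilde{g}(x,\omega_i)$ across a cell. One small caveat in your write-up: the bound $\|\tilde{g}(\bullet,\omega)\|_{L^\infty(\tilde{I})}\leq 1$ is a property of the normalized rational dictionary $\mathcal{D}$, not a hypothesis of this general lemma; to bound the tail you should instead invoke the standing assumption that $\widetilde{\mathcal{D}}$ is a bounded subset of $X$, so the tail term is at most $\varepsilon\,\sup_{\omega\in\mathcal{W}}\|\tilde{g}(\bullet,\omega)\|_{L^\infty(\tilde{I})}$.
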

\begin{proof}
Let $m(W)$ {be} the measure of a set $W\subset\mathbb{R}^m$. For any $\epsilon>0$, there exists $\delta>0$ such that whenever a partition $\{W_i\}_{i\geq1}$ of $\mathcal{W}$ and $\omega_i \in W_i$ satisfies $\sup_im(W_i)<\delta$, we have for any $x \in \tilde{I}$,
\begin{equation}\label{segmentation}
\big|f(x)-\sum _{i\geq1} m(W_i)h(\omega_i)\tilde{g}(x,\omega_i)\big| < \epsilon.  
\end{equation}
Then we can take a sufficiently small $\delta_1>0$ such that \eqref{segmentation} holds and 
\begin{equation*}
\sum _{i=1} m(W_i)|h(\omega_i)|\leq2\int_\mathcal{W} |h(\omega)|d\omega:=M.
\end{equation*}
Therefore, $f\in\mathscr{L}_1(\widetilde{\mathcal{D}})$ with
$\|f\|_{\mathscr{L}_1(\widetilde{\mathcal{D}})}\leq M$.
\end{proof}

Then we show that the target functions used in section \ref{subsec:Fractional}--\ref{subsec:adaptive} are contained in $\mathscr{L}_1(\mathcal{D})$.
\begin{corollary}\label{thrL1xalpha}
Let $\mathcal{D}$ be defined in  \eqref{rationaldictionary}. Given any $s\in (0,1) $, we have $(x^s+k)^{-1} \in \mathscr{L}_1(\mathcal{D})$ for $k\geq0$.
\end{corollary}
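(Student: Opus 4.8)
The plan is to produce an integral representation of $(x^s+k)^{-1}$ against the dictionary $\mathcal{D}$ and then invoke Lemma~\ref{lemmaL1}. First I would establish that on $I=[\eta,1]$ one has
\[
\frac{1}{x^s+k}=\int_0^\infty\frac{\rho(t)}{x+t}\,dt
\]
for an explicit nonnegative weight $\rho$. Using $\frac{1}{x+t}=\frac{1}{\eta+t}\,g(x,t)$, where $g(x,t)=\frac{\eta+t}{x+t}\in\mathcal{D}$ is the (already $L^\infty(I)$-normalized) dictionary element, this rewrites as $(x^s+k)^{-1}=\int_0^\infty h(t)\,g(x,t)\,dt$ with $h(t)=\rho(t)/(\eta+t)$, which is exactly the hypothesis \eqref{thrL11} of Lemma~\ref{lemmaL1} with $\mathcal{W}=(0,\infty)$ and $\tilde g=g$.

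To obtain $\rho$ I would exploit that $z\mapsto(z^s+k)^{-1}$ is holomorphic off the branch cut $(-\infty,0]$ and decays at infinity, and integrate it against a keyhole contour collapsing onto the cut; structurally, $z^s+k$ is a complete Bernstein function for $0<s<1$ and $k\ge0$, so its reciprocal is a Stieltjes function, which guarantees a representation of precisely this form with a nonnegative measure and no residual $a/z$ or constant term. Either route yields the density as the boundary jump across the cut,
\[
\rho(t)=-\frac1\pi\,\mathrm{Im}\,\frac{1}{t^s e^{i\pi s}+k}
=\frac1\pi\,\frac{t^s\sin(\pi s)}{\bigl(t^s\cos(\pi s)+k\bigr)^2+t^{2s}\sin^2(\pi s)},
\]
which is nonnegative since $\sin(\pi s)>0$ on $(0,1)$; at $k=0$ it collapses to the classical formula $x^{-s}=\frac{\sin\pi s}{\pi}\int_0^\infty t^{-s}(x+t)^{-1}\,dt$, providing a useful consistency check.

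It then remains to verify the two hypotheses of Lemma~\ref{lemmaL1}. Uniform continuity of $g$ on $I\times(0,\infty)$ is routine: writing $g=1+\frac{\eta-x}{x+b}$, the partials $\partial_x g$ and $\partial_b g$ are bounded on $[\eta,1]\times(0,\infty)$ by constants depending only on $\eta$, so $g$ is Lipschitz and hence uniformly continuous there. The integrability $\int_0^\infty|h(t)|\,dt<\infty$ is the crux of the argument and where I expect the main obstacle to lie: as $t\to\infty$ we have $\rho(t)\sim\frac{\sin\pi s}{\pi}t^{-s}$, which is \emph{not} integrable for $s\in(0,1)$, so $\int_0^\infty\rho=\infty$; the decisive point is that the dictionary weight $1/(\eta+t)\sim t^{-1}$ upgrades the tail to $h(t)\sim\frac{\sin\pi s}{\pi}t^{-s-1}$, integrable at infinity precisely because $s>0$. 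Near $t=0$ the density stays bounded when $k>0$ and behaves like $t^{-s}$ when $k=0$, integrable since $s<1$. Thus $h\in L^1(0,\infty)$ for every $s\in(0,1)$ and $k\ge0$.

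The only remaining technicality is that $\mathcal{W}=(0,\infty)$ is unbounded, whereas the Riemann-sum argument underlying Lemma~\ref{lemmaL1} is phrased on a domain; I would dispatch this by truncation, noting that $\|g\|_{L^\infty(I)}=1$ together with $\int_T^\infty|h|\to0$ makes the tail on $(T,\infty)$ uniformly negligible in $x$, so one applies the lemma on a bounded parameter interval and passes to the limit. With $h\in L^1$ in hand, Lemma~\ref{lemmaL1} yields $(x^s+k)^{-1}\in\mathscr{L}_1(\mathcal{D})$ and the bound $\|(x^s+k)^{-1}\|_{\mathscr{L}_1(\mathcal{D})}\lesssim\int_0^\infty|h(t)|\,dt$. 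I anticipate the density computation and the integrability-at-infinity check to carry all the weight, with the continuity verification and truncation being bookkeeping.
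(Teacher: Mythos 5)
Your proposal is correct and takes essentially the same route as the paper: the same Stieltjes integral representation with the identical density $\rho(t)=\frac{\sin\pi s}{\pi}\,\frac{t^s}{(t^s\cos\pi s+k)^2+(t^s\sin\pi s)^2}$, rewritten against the normalized dictionary elements and fed into Lemma~\ref{lemmaL1}. The only minor difference is in verifying $h\in L^1(0,\infty)$: you argue via tail asymptotics at $0$ and $\infty$, whereas the paper simply evaluates the representation at $x=\eta$, which gives $\int_0^\infty h(t)\,dt=(\eta^s+k)^{-1}<\infty$ in one line.
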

\begin{proof} 
When $k\geq0$, $(x^s+k)^{-1}$ belongs to the class of Stieltjes functions after 
$x^{-s}$ (cf.~\cite{Hirsch1972Int}) and admits the following integral representation (cf.~\cite{Schwarz2005Stieltjes}) 
\begin{equation}
    \frac{1}{x^s+k} = \frac{\sin \pi s}{\pi}\int_0^{+\infty} \frac{t^s}{\left(t^s \cos \pi s+k\right)^2+\left(t^s \sin \pi s\right)^2} \cdot \frac{1}{x+t} dt,\quad x>0.
\end{equation}
{It is straightforward to see that}
\begin{equation*}
\frac{\sin \pi s}{\pi}\int_0^{+\infty} \frac{t^s}{\left(t^s \cos \pi s+k\right)^2+\left(t^s \sin \pi s\right)^2} \cdot \frac{1}{\eta+t} dt = \frac{1}{\eta^s+k} < +\infty.
\end{equation*}
Combining the above results with Lemma \ref{lemmaL1} completes the proof.
\end{proof}

\subsection{Convergence Rate of Entropy Numbers}\label{entropy}
First we summarize two simple properties of the entropy numbers in the next lemma, see Chapter 7 in \cite{Temlyakov2018} and section 15.7 in \cite{devore1993constructive}.
\begin{lemma}
Let $B_X$ be a unit ball in a $d$-dimensional Banach space, then   
\begin{equation}\label{unitballentropy}
\varepsilon_n(B_X)_X \leq 3\cdot 2^{-n/d}.    
\end{equation}
For any $A, B \subset X$ and $m,n \geq 0$,
\begin{equation}\label{lemmaLorentz}
\varepsilon_{m+n}(A+B)_X \leq \varepsilon_m(A)_X+\varepsilon_n(B)_X.  
\end{equation} 
\end{lemma}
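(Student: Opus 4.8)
The plan is to prove the two inequalities separately, both by elementary covering arguments. For \eqref{unitballentropy} I would use a standard volumetric packing estimate. Fix a linear identification of $X$ with $\mathbb{R}^d$ and let $\lambda$ denote Lebesgue measure; since every norm ball is a convex body with positive finite volume, translation invariance and scaling give $\lambda(x + r B_X) = r^d \lambda(B_X)$ for all $x$ and $r > 0$. If $3 \cdot 2^{-n/d} \geq 1$ the claim is immediate, because $\varepsilon_n(B_X)_X \leq \varepsilon_0(B_X)_X \leq 1$ (the ball $B_X$ covers itself), so I would assume $\varepsilon := 3 \cdot 2^{-n/d} < 1$. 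Choosing a maximal $\varepsilon$-separated subset $\{x_1, \ldots, x_N\}$ of $B_X$, maximality forces the balls $B(x_i, \varepsilon)$ to cover $B_X$, while the disjoint balls $B(x_i, \varepsilon/2)$ are all contained in $(1 + \varepsilon/2) B_X$.

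Comparing volumes then yields $N (\varepsilon/2)^d \leq (1 + \varepsilon/2)^d$, hence $N \leq ((2 + \varepsilon)/\varepsilon)^d \leq (3/\varepsilon)^d = 2^n$, where the last inequality uses $\varepsilon < 1$ and the equality uses $\varepsilon = 3 \cdot 2^{-n/d}$. Thus $B_X$ is covered by at most $2^n$ balls of radius $\varepsilon$, which gives $\varepsilon_n(B_X)_X \leq \varepsilon = 3 \cdot 2^{-n/d}$.

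For \eqref{lemmaLorentz} I would argue directly from the definition via a product cover. Pick any $\varepsilon_A > \varepsilon_m(A)_X$ and $\varepsilon_B > \varepsilon_n(B)_X$, so that $A$ is covered by $2^m$ balls with centers $\{a_i\}$ of radius $\varepsilon_A$ and $B$ by $2^n$ balls with centers $\{b_j\}$ of radius $\varepsilon_B$. The $2^{m+n}$ sums $a_i + b_j$ serve as admissible centers: given $z = a + b \in A + B$, choosing $i, j$ with $\|a - a_i\| \leq \varepsilon_A$ and $\|b - b_j\| \leq \varepsilon_B$ and applying the triangle inequality gives $\|z - (a_i + b_j)\| \leq \varepsilon_A + \varepsilon_B$. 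Hence $A + B$ is covered by $2^{m+n}$ balls of radius $\varepsilon_A + \varepsilon_B$, and letting $\varepsilon_A \downarrow \varepsilon_m(A)_X$ and $\varepsilon_B \downarrow \varepsilon_n(B)_X$ yields the claim.

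The main obstacle, such as it is, lies entirely in the first part: one must verify that the volume-scaling identity and the packing/covering duality remain valid for a general, possibly non-Euclidean, norm on $X$, and one must separate out the trivial regime $3 \cdot 2^{-n/d} \geq 1$ so that the estimate $(2+\varepsilon)/\varepsilon \leq 3/\varepsilon$ is available and the constant $3$ emerges correctly. The subadditivity \eqref{lemmaLorentz} is purely formal once the product-cover construction is written down.
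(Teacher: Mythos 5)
Your proof is correct and needs no repair: the paper does not prove this lemma itself but simply cites \cite{Temlyakov2018} and \cite{devore1993constructive}, and your volumetric packing/covering argument for \eqref{unitballentropy} (including the separation of the trivial regime $3\cdot 2^{-n/d}\geq 1$ and the use of convexity to get $B_X+(\varepsilon/2)B_X=(1+\varepsilon/2)B_X$) together with the product-cover argument for \eqref{lemmaLorentz} are exactly the standard proofs found in those references. Nothing further is needed.
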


To analyze the decay rate of $\varepsilon_n(B_1(\mathcal{D}))$, we also make use of the Kolmogorov $n$-width of a set $F\subset X$:
\begin{equation*}
d_n(F)_X:=\inf_{\dim(V)=n} \sup _{f \in F} \inf _{g \in V}\|f-g\|_X,  
 \end{equation*} 
which describes the best possible approximation error of $F$ by an $n$-dimensional subspace in $X$. 

\begin{lemma}\label{cupD}
Assume $A = \cup_{i=1}^kA_i$ is a subset in $X$, then 
\begin{equation*}
    d_{kn}(A)_X \leq \max_{i=1,\ldots,k}\{d_n(A_i)_X\}.
\end{equation*}
\end{lemma}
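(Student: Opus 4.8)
The plan is to build a near-optimal $kn$-dimensional approximating subspace for $A$ by superposing near-optimal $n$-dimensional subspaces for the individual pieces $A_i$. First I would fix $\epsilon>0$ and, for each $i=1,\ldots,k$, use the definition of $d_n(A_i)_X$ to select an $n$-dimensional subspace $V_i\subset X$ with
\begin{equation*}
\sup_{f\in A_i}\inf_{g\in V_i}\|f-g\|_X\leq d_n(A_i)_X+\epsilon.
\end{equation*}
The additive slack $\epsilon$ is needed precisely because the infimum defining the Kolmogorov width need not be attained, so one cannot in general pick an exactly optimal $V_i$.

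Next I would set $V:=V_1+\cdots+V_k=\operatorname{Span}(V_1\cup\cdots\cup V_k)$, so that $\dim V\leq kn$. The key observation is that any $f\in A=\cup_{i=1}^kA_i$ lies in some $A_i$, and since $V_i\subseteq V$ the best approximation error from $V$ is no larger than that from $V_i$:
\begin{equation*}
\inf_{g\in V}\|f-g\|_X\leq\inf_{g\in V_i}\|f-g\|_X\leq d_n(A_i)_X+\epsilon\leq\max_{1\leq i\leq k}d_n(A_i)_X+\epsilon.
\end{equation*}
Taking the supremum over $f\in A$, and using that any subspace of dimension at most $kn$ can be enlarged to dimension exactly $kn$ without increasing the approximation error (monotonicity of the width in the subspace dimension), I obtain $d_{kn}(A)_X\leq\max_i d_n(A_i)_X+\epsilon$.

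Finally, since $\epsilon>0$ was arbitrary, letting $\epsilon\to0$ yields the claimed inequality. I do not anticipate any serious obstacle: the argument is essentially the standard superposition principle for Kolmogorov widths of unions, and the only two points demanding care are the possible non-attainment of the infimum (absorbed into the $\epsilon$-slack) and the dimension bookkeeping, namely that the sum of $k$ subspaces of dimension $n$ has dimension at most $kn$, with exact equality unnecessary thanks to monotonicity.
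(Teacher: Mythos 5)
Your proposal is correct and follows essentially the same argument as the paper: choose $\epsilon$-near-optimal $n$-dimensional subspaces $V_i$ for each $A_i$, approximate $A$ from the sum $V=V_1+\cdots+V_k$ of dimension at most $kn$, and send $\epsilon\to0$. The only difference is cosmetic: you explicitly note the monotonicity/dimension-padding point needed because the width is defined via subspaces of exact dimension $kn$, which the paper leaves implicit.
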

\begin{proof}
For any $\epsilon>0$, there exists $n$-dimensional spaces $V_i,~ i =1,\ldots,k$, such that 
\begin{equation*}
   d_n(A_i)_X \geq \sup _{f \in A_i} \inf _{g \in V_i}\|f-g\|_X -\epsilon, \quad i=1,\ldots,k. 
\end{equation*}
Set $V = V_1+\ldots+V_k$, thus dim$(V)\leq kn$. Then it could be seen that
\begin{equation*}
\begin{aligned}
d_{kn}(A)_X &\leq\sup _{f \in A} \inf _{g \in V}\|f-g\|_X \leq \max_{i=1,...,k}\{\sup _{f \in A_i} \inf _{g \in V_i}\|f-g\|_X \} \\
&\leq \max_{i=1,...,k}\{d_n(A_i)_X\} + \epsilon .   
\end{aligned}
\end{equation*}
Sending $\epsilon$ to zero completes the proof.
\end{proof}

The classical Carl's inequality reveals a connection between  asymptotic convergence rates of the Kolmogorov $n$-width and entropy numbers: $d_n(K)_X =O(n^{-\alpha})\Longrightarrow\varepsilon_n(K)_X=O(n^{-\alpha})$ in the polynomial-decay regime, see \cite{Carl1981,Lorentz1996}. 
In the next theorem, we derive a sub-exponential analogue of the Carl's inequality.
\begin{theorem}\label{widthentropy}
Let $K$ be a compact set in a Banach space $X$. Then we have
\begin{equation}\label{Carl}
d_n(K)_X \leq C_1{\rm e}^{-r n^\alpha}\Longrightarrow\varepsilon_n(K)_X \leq C_2{\rm e}^{-C_3n^{\frac{\alpha}{\alpha+1}}}, 
\end{equation}
where $\alpha>0$ and the constants $C_2, C_3>0$ only depend on the constants $C_1>0$ and $r>0$.
\end{theorem}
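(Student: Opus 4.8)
The plan is to build explicit finite coverings of $K$ that interpolate between two mechanisms already available to us: approximation of $K$ by a finite-dimensional subspace, quantified by the Kolmogorov widths $d_m(K)_X$, and the covering of a bounded piece of that subspace, quantified by the finite-dimensional entropy bound \eqref{unitballentropy}. Heuristically, to cover $K$ to accuracy $\varepsilon$ one replaces $K$ by an $m$-dimensional approximation with $m$ just large enough that $d_m(K)_X \lesssim \varepsilon$, and then covers the resulting bounded set in the $m$-dimensional space by about $(1/\varepsilon)^m$ balls. Since the hypothesis forces $m \sim (\ln(1/\varepsilon))^{1/\alpha}$, the exponent count gives $n \sim m\ln(1/\varepsilon) \sim (\ln(1/\varepsilon))^{(\alpha+1)/\alpha}$, and inverting this relation between the number of balls and the accuracy yields exactly the stated rate $\varepsilon_n(K)_X \lesssim e^{-C_3 n^{\alpha/(\alpha+1)}}$.

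First I would record that the hypothesis at $n=0$ gives $\sup_{f\in K}\|f\|_X = d_0(K)_X \leq C_1$, so $K \subset C_1 B_X$; this is what allows the final constants to depend only on $C_1$, $r$ (and $\alpha$). Fixing an accuracy $\varepsilon \in (0,C_1]$, I would let $m=m(\varepsilon)$ be the smallest integer with $C_1 e^{-r m^\alpha}\leq \varepsilon/2$, so that $d_m(K)_X \leq \varepsilon/2$ and $m \leq (r^{-1}\ln(2C_1/\varepsilon))^{1/\alpha}+1$. Choosing a subspace $V_m$ of dimension $m$ realizing the width up to an arbitrarily small slack $\epsilon'$, every $f\in K$ splits as $f=g+h$ with $g\in V_m$, $\|h\|_X\leq \varepsilon/2$, and $\|g\|_X\leq \|f\|_X+\|h\|_X\leq 2C_1$. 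Hence $K \subset G + (\varepsilon/2)B_X$ with $G:=V_m\cap 2C_1 B_X$.

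Next I would convert this decomposition into an entropy estimate using the two tools already proved. Applying \eqref{lemmaLorentz} with second index $0$ and noting $\varepsilon_0((\varepsilon/2)B_X)_X=\varepsilon/2$ gives $\varepsilon_a(K)_X \leq \varepsilon_a(G)_X + \varepsilon/2$ for every $a$. Since $G$ is a radius-$2C_1$ ball inside the $m$-dimensional space $V_m$, \eqref{unitballentropy} yields $\varepsilon_a(G)_X \leq 6C_1\cdot 2^{-a/m}$, which is $\leq \varepsilon/2$ as soon as $a \geq m\log_2(12C_1/\varepsilon)$. Taking $a=a(\varepsilon):=\lceil m\log_2(12C_1/\varepsilon)\rceil$ therefore produces $\varepsilon_{a(\varepsilon)}(K)_X \leq \varepsilon$, and the bounds on $m$ and on $\log_2(12C_1/\varepsilon)$ combine to $a(\varepsilon)\leq C_4(\ln(1/\varepsilon))^{(\alpha+1)/\alpha}$ for all small $\varepsilon$, with $C_4$ depending only on $C_1,r,\alpha$.

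The last step is the inversion. For given $n$ I would set $\varepsilon=\varepsilon(n):=C_2 e^{-C_3 n^{\alpha/(\alpha+1)}}$ and verify that $C_3$ can be taken small enough (and $C_2$ large enough) that $a(\varepsilon(n))\leq n$: indeed $\ln(1/\varepsilon(n))\approx C_3 n^{\alpha/(\alpha+1)}$, so $a(\varepsilon(n))\leq C_4(C_3 n^{\alpha/(\alpha+1)})^{(\alpha+1)/\alpha}=C_4 C_3^{(\alpha+1)/\alpha}\,n\leq n$ once $C_4 C_3^{(\alpha+1)/\alpha}\leq 1$. Monotonicity of the entropy numbers then gives $\varepsilon_n(K)_X \leq \varepsilon_{a(\varepsilon(n))}(K)_X \leq \varepsilon(n)$, which is the claim, while the regime of small $n$ (where $\varepsilon(n)$ may exceed $C_1$) is absorbed by enlarging $C_2$, since always $\varepsilon_n(K)_X \leq \varepsilon_0(K)_X \leq C_1$. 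The main obstacle is precisely this final constant-chasing: one must carry the additive lower-order terms in the bounds on $m$ and $a(\varepsilon)$ through the inversion, confirm that the emerging exponent is exactly $\alpha/(\alpha+1)$, and check that $C_2,C_3$ depend only on $C_1,r$ (via $R\leq C_1$) and $\alpha$; the separate treatment of small $n$ and of the width slack $\epsilon'$ is routine but must be tracked.
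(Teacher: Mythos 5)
Your proposal is correct, and it takes a genuinely different route from the paper. The paper proves the implication with the classical multi-scale (dyadic) template behind Carl's inequality: it takes subspaces $V_i$ of dimension $2^i$, forms the telescoping differences $t_i(f)=l_i(f)-l_{i-1}(f)$ living in spaces $T_i=V_i+V_{i-1}$ of dimension $\leq 3\cdot 2^{i-1}$, covers each layer separately with a budget $m_i$ chosen scale by scale as in \eqref{chooseni}, and sums the resulting errors. You instead use a single-scale argument: pick one dimension $m(\varepsilon)$ with $d_m(K)_X\leq\varepsilon/2$, write $K\subset\bigl(V_m\cap 2C_1B_X\bigr)+(\varepsilon/2)B_X$, cover the one finite-dimensional ball volumetrically via \eqref{unitballentropy} and \eqref{lemmaLorentz}, and invert the relation $n\sim m(\varepsilon)\log(1/\varepsilon)$. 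The reason your shortcut works here, where it would fail for the classical Carl inequality, is specific to the sub-exponential regime: with $d_m\sim{\rm e}^{-rm^\alpha}$ one gets $m\sim(\ln(1/\varepsilon))^{1/\alpha}$ and hence a covering index $m\log(1/\varepsilon)\sim(\ln(1/\varepsilon))^{(\alpha+1)/\alpha}$, which is exactly the same total budget the paper's geometric allocation produces, so no exponent is lost; for polynomial width decay the single-scale count loses a logarithmic factor and the multi-scale decomposition becomes essential. So the paper's proof is the one that follows the template generalizing across decay regimes, while yours is shorter and has lighter bookkeeping for this particular statement (and even yields a slightly cleaner constant, without the $(6r+1)$ factor). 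Your remark that the constants also depend on $\alpha$ is accurate — the paper's statement elides this — and the remaining items you flag (the width slack $\epsilon'$, the small-$n$ regime absorbed into $C_2$, monotonicity of $\varepsilon_n$ in $n$ and under inclusion) are indeed routine and handled correctly in your outline.
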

\begin{proof}
It suffices to prove \eqref{Carl} with $n$ replaced by $2^n$. By the definition of Kolmogorov $n$-width, for each $i=0,1,...,n$, there exists a $2^i$-dimensional subspace $V_i$, such that for any $f\in K$, there exists an approximant $l_i(f)\in V_i$ such that 
\begin{equation*}
\|f-l_i(f)\|_X\leq C_1{\rm e}^{-r2^{i\alpha}}.
\end{equation*}
For $i=0,1,...,n$, we define $V_{-1}=\{0\}$, $l_{-1}(f)=0$, and  $t_i(f):=l_i(f)-l_{i-1}(f)$. Then $l_n(f)=\sum_{i=0}^n t_i(f)$, and {$t_i(f)\in V_i+V_{i-1} =:T_i$}. For $i=1,2,\ldots,n$, one can see that
\begin{equation*}
\begin{aligned}
    \dim(T_i)&\leq 2^i+2^{i-1} = 3\cdot2^{i-1},\\
    \|t_i(f)\|_X&\leq C_1{\rm e}^{-r2^{i\alpha}} + C_1{\rm e}^{-r2^{(i-1)\alpha}} \leq 2C_1{\rm e}^{-r2^{(i-1)\alpha}}.
\end{aligned}
\end{equation*}
Thus all of the $t_i(f)$ are covered by the ball of radius $2C_1{\rm e}^{-r2^{(i-1)\alpha}}$ at the origin in $T_i$. We use $2^{m_i}$ balls {$B(y_1^i,\varepsilon_{m_i})$, $B(y_2^i,\varepsilon_{m_i})$, ... , $B(y_{2^{m_i}}^i,\varepsilon_{m_i})$} of radius $\varepsilon_{m_i}$ to cover it, where each $B(y_{2^{m_i}}^i,\varepsilon_{m_i})$ is centered at $y_{2^{m_i}}^i$. It then follows from \eqref{unitballentropy} that for $i=1,2,\ldots,n$,
\begin{equation*}
\begin{aligned}
\inf_{j=1,2,...,2^{m_i} }\|t_i(f)-y_j^i\|_X &\leq 
{\varepsilon_{m_i}\left(2C_1{\rm e}^{-r2^{(i-1)\alpha}}B_{T_i}\right)_{T_i}} \\
&\leq 6C_1{\rm e}^{-r2^{(i-1)\alpha}}\left(2^{-m_i / (3\cdot2^{i-1})}\right) .   \end{aligned}
\end{equation*}
On the other hand, $\dim(T_0)=1$. By $\|f-l_0(f)\|_X\leq C_1{\rm e}^{-r}$ and $\|f\|_X\leq d_0(K)_X\leq C_1$, therefore $\|t_0(f)\|_X\leq 2C_1$. We also use $2^{m_0}$ balls $y_1^0,y_2^0,...,y_{2^{m_0}}^0$ of radius $\varepsilon_{m_0}$ to cover it. Thus,
\begin{equation*}
  \inf_{j=1,2,...,2^{m_0} }\|t_0(f)-y_j^0\|_X \leq 6C_1\cdot 2^{-m_0} .
\end{equation*}
Let {$Y:=\{y_{j_0}^0+y_{j_1}^1+...+y_{j_n}^n: ~j_i = 1,2,\ldots2^{m_i},~i = 0,1,\ldots n\}$}. Thus the number of elements in $Y$ does not exceed $2^{(\sum_{i=0}^{n}m_i)}$. We then choose $m_i$ as:
\begin{equation}\label{chooseni}
\begin{aligned}
m_i= \begin{cases}\frac{1}{\log2}2^{\frac{\alpha n}{\alpha+1}}, & i=0,\\\frac{3r}{\log2}2^{\frac{\alpha n}{\alpha+1}+i-1}, & 1\leq i< \frac{n}{\alpha+1}+1,\\0, & \frac{n}{\alpha+1}+1 \leq i \leq n . \end{cases}  \end{aligned}
\end{equation}
In what follows,
\begin{equation}\label{sumni}
\sum_{i=0}^{n}m_i = \frac{1}{\log2}2^{\frac{\alpha n}{\alpha+1}} + \sum_{i=1}^{[n/(\alpha+1)]+1}\frac{3r}{\log2}2^{\frac{\alpha n}{\alpha+1}+i-1} \leq \frac{6r+1}{\log2}2^{n}.
\end{equation}
Obviously $2^{(i-1)\alpha} \geq (i-1)\alpha$ when $i\geq 1$, and there exists $\gamma >0$ such that
$$2^{(i-1)\alpha} - 2^{\frac{n\alpha}{\alpha+1}} \geq \gamma\left(i-1-\frac{n}{\alpha+1}\right)\alpha$$ when $ i \geq n/(\alpha+1)+1$. Thus by \eqref{chooseni},
\begin{equation*}
\begin{aligned}
r2^{(i-1)\alpha}+\log2\frac{m_i}{3\cdot2^{i-1}}\geq \begin{cases}r(2^{\frac{\alpha n}{\alpha+1}}+(i-1)\alpha), & 1\leq i< \frac{n}{\alpha+1}+1,\\r(2^{\frac{\alpha n}{\alpha+1}}+\gamma(i-1-\frac{n}{\alpha+1})\alpha), & \frac{n}{\alpha+1}+1 \leq i \leq n . \end{cases}    
\end{aligned}    
\end{equation*}

Then we approximate $f \in K$ by elements of $Y$:
\begin{equation}\label{erroroff}
\begin{aligned}
\inf_{y\in Y}\|f-y\|_X & \leq \|f-l_n(f)\|_X + \sum_{i=0}^{n}\inf_{j_i}\left\|t_i(f)- y_{j_i}^i\right\|_X \\
& \leq C_1{\rm e}^{-r2^{n\alpha}} + 6C_1{\rm e}^{-r2^{\frac{\alpha n}{\alpha+1}}}+6C_1\sum_{i=1}^{i<n/(\alpha+1)+1}{\rm e}^{-r(2^{\frac{\alpha n}{\alpha+1}}+(i-1)\alpha)} \\
& \quad + 6C_1\sum_{i\geq n/(\alpha+1)+1}^{n}{\rm e}^{-r(2^{\frac{\alpha n}{\alpha+1}}+\gamma(i-1-\frac{n}{\alpha+1})\alpha)} \\
& \leq C_2{\rm e}^{-r2^{\frac{\alpha n}{\alpha+1}}},
\end{aligned}
\end{equation}
where the constant $C_2$ only depends on $C_1$ and $r$.
Then by \eqref{sumni} and \eqref{erroroff} we have
\begin{equation*}
\varepsilon_{\frac{6r+1}{\log2}2^{n}}(K)_X\leq C_2{\rm e}^{-r2^{\frac{\alpha n}{\alpha+1}}},
\end{equation*}
which implies
{
\begin{equation*}
\varepsilon_n(K)_X \leq C_2{\rm e}^{-r(\frac{n\log2}{6r+1})^{\frac{\alpha}{\alpha+1}}} =: C_2{\rm e}^{-C_3n^{\frac{\alpha}{\alpha+1}}}.   
\end{equation*}}
The proof is complete.
\end{proof}

Now we are in a position to present sub-exponential convergence $d_n(\widetilde{\mathcal{D}})_X$ and $\varepsilon_n(B_1(\widetilde{\mathcal{D}}))_X$ for an analytically smooth dictionary $\widetilde{\mathcal{D}}$.
\begin{theorem}\label{thmnwidth}
Let $\mathcal{W}$ be a convex domain in $\mathbb{R}^m$ and  $\mathscr{G}$ be a parameterization mapping $\omega \in \mathcal{W}$ to a function $\tilde{g}(\bullet,\omega)$ on $\tilde{I}$. If $\mathscr{G}$ has an analytic continuation on an open neighborhood $\mathcal{U} \subset \mathbb{C}^m$ of $\mathcal{W}$, then for $\widetilde{\mathcal{D}}=\mathscr{G}(\mathcal{W})$ it holds that
\begin{subequations}\label{nwidthrate}
\begin{align}
d_n(\widetilde{\mathcal{D}})_{L^{\infty}(\tilde{I})}\lesssim {\rm e}^{-C_4n^{\frac{1}{m}}}, \label{nwidthrate:a}\\
\varepsilon_n(B_1(\widetilde{\mathcal{D}}))_{L^{\infty}(\tilde{I})}\lesssim {\rm e}^{-C_5n^{\frac{1}{m+1}}}, \label{nwidthrate:b}
\end{align}
\end{subequations}
where the constants $C_4>0$ and $C_5>0$ are independent of $n$.
\end{theorem}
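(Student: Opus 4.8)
The plan is to establish the width bound \eqref{nwidthrate:a} by a local analytic (Taylor) approximation argument, and then to deduce the entropy bound \eqref{nwidthrate:b} from it by combining the elementary identity $d_n(B_1(\widetilde{\mathcal D}))_X=d_n(\widetilde{\mathcal D})_X$ with the sub-exponential Carl inequality of Theorem~\ref{widthentropy}. Here $X=L^\infty(\tilde I)$, and I treat $\mathscr G$ as an $X$-valued holomorphic map on $\mathcal U$.

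For \eqref{nwidthrate:a}, I would first exploit analyticity and compactness to obtain uniform constants. Since $\overline{\mathcal W}$ is a compact subset of the open set $\mathcal U\subset\mathbb C^m$ on which $\mathscr G$ is holomorphic, there are $\rho_0>0$ and $M<\infty$ such that for every $\omega_0\in\mathcal W$ the polydisc $\{\zeta\in\mathbb C^m:\ \max_j|\zeta_j-\omega_{0,j}|\le\rho_0\}$ lies in $\mathcal U$ and $\|\mathscr G\|_{L^\infty(\tilde I)}\le M$ there. Fix $\theta=2$ and cover $\overline{\mathcal W}$ by finitely many Euclidean balls $\mathcal W_1,\dots,\mathcal W_N$ of radius $\sigma<\rho_0/\theta$ with centers $\omega_i$; crucially, the number $N$ depends only on $\mathcal W$ and $\rho_0$, not on the Taylor degree.

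On each piece, for $\omega\in\mathcal W_i$ I would consider the scalar slice $\phi(z)=\mathscr G(\omega_i+z(\omega-\omega_i))$, which is holomorphic for $|z|\le\theta$ because each coordinate of $\omega_i+z(\omega-\omega_i)$ then deviates from $\omega_i$ by at most $\theta\sigma<\rho_0$, keeping the argument inside $\mathcal U$. Cauchy's estimate for the Banach-valued Taylor remainder gives $\|\phi(1)-\sum_{j=0}^k\phi^{(j)}(0)/j!\|_{L^\infty(\tilde I)}\le M\theta^{-k}/(\theta-1)$, and each partial sum $\sum_{j\le k}\phi^{(j)}(0)/j!$ belongs to $V_i^{(k)}:=\mathrm{Span}\{\partial^\beta\mathscr G(\omega_i):\ |\beta|\le k\}$, whose dimension is at most $\binom{m+k}{k}\le Ck^m$. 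Thus $d_{Ck^m}(\mathscr G(\mathcal W_i))_{L^\infty(\tilde I)}\lesssim\theta^{-k}$ uniformly in $i$. Writing $\widetilde{\mathcal D}=\bigcup_{i=1}^N\mathscr G(\mathcal W_i)$ and applying Lemma~\ref{cupD}, I would conclude $d_{NCk^m}(\widetilde{\mathcal D})_{L^\infty(\tilde I)}\lesssim\theta^{-k}$; balancing $n\sim NCk^m$, i.e.\ $k\sim c\,n^{1/m}$, yields \eqref{nwidthrate:a}.

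Finally, for \eqref{nwidthrate:b} I would note that any $n$-dimensional subspace $V$ approximating $\widetilde{\mathcal D}$ to accuracy $\varepsilon$ also approximates every $\sum_i c_ig_i$ with $\sum_i|c_i|\le1$ to the same accuracy (approximate each $g_i$ within $\varepsilon$ in $V$, take the same linear combination, and pass to the closure), so $d_n(B_1(\widetilde{\mathcal D}))_X=d_n(\widetilde{\mathcal D})_X\lesssim\mathrm e^{-C_4n^{1/m}}$. Since $B_1(\widetilde{\mathcal D})$ is the closed absolutely convex hull of the compact set $\overline{\widetilde{\mathcal D}}$ and hence compact, Theorem~\ref{widthentropy} applies with $\alpha=1/m$; using $\frac{1/m}{1/m+1}=\frac1{m+1}$ this converts the width decay into $\varepsilon_n(B_1(\widetilde{\mathcal D}))_X\lesssim\mathrm e^{-C_5n^{1/(m+1)}}$, which is \eqref{nwidthrate:b}. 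The main obstacle is the uniform geometric control of the Taylor remainder: a thin neighborhood of analyticity does not permit a single global Taylor expansion (this would require a polydisc of radius exceeding $\mathrm{diam}(\mathcal W)$), so the crux is to localize, covering $\mathcal W$ by a fixed number $N$ of balls small enough that each complex slice remains inside $\mathcal U$ up to radius $\theta>1$, and then to recombine the local approximation spaces through Lemma~\ref{cupD} without letting $N$ grow with the degree $k$.
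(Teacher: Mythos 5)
Your proposal is correct and follows essentially the same route as the paper: local Taylor approximation spaces $\mathrm{Span}\{\partial^{\beta}\mathscr{G}(\omega_i):|\beta|\le k\}$ of dimension $O(k^m)$, a covering of $\mathcal{W}$ recombined via Lemma~\ref{cupD}, and then \eqref{nwidthrate:b} deduced from \eqref{nwidthrate:a} through $d_n(B_1(\widetilde{\mathcal{D}}))_X=d_n(\widetilde{\mathcal{D}})_X$ and Theorem~\ref{widthentropy} with $\alpha=1/m$. The only (harmless, arguably cleaner) deviation is technical: you bound the remainder by one-dimensional complex slices $\phi(z)=\mathscr{G}(\omega_i+z(\omega-\omega_i))$ with scalar Cauchy estimates and always cover $\mathcal{W}$ by small balls, whereas the paper uses the multivariate Lagrange remainder estimated by the Cauchy integral formula and splits into the cases $d<\delta$ and $d\ge\delta$.
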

\begin{proof}
For a multi-index $\mathbf{a} = (a_1,a_2,\ldots,a_m)$, we adopt the conventional notation $|\mathbf{a}| = a_1+a_2+\cdots+a_m$, and $\mathbf{a}!=a_1!a_2!\cdots a_m!$, $\mathbf{w}^{\mathbf{a}}= \omega_1^{a_1}\omega_2^{a_2}\cdots \omega_m^{a_m}$, where $\mathbf{w}=(\omega_1,\omega_2,\ldots,\omega_m)$ is a vector.

Let $\mathbf{w}_0$ be an element in $\mathcal{W}$, and $\mathbf{h} = (h_1,h_2,\ldots ,h_m)$ be a vector such that $\mathbf{w}_0+\mathbf{h} \in \mathcal{W}$.  
Since $\mathscr{G}$ is analytic in $\mathcal{U}$, by the multivariate Taylor expansion formula, for any positive integer $n$, there exists a $\theta \in (0,1)$ such that
\begin{equation*}
\mathscr{G}(\mathbf{w}_0+\mathbf{h}) = \sum_{k=0}^n\sum_{|\mathbf{a}|=k}\frac{\partial^{a_1+a_2+\cdots+a_m}\mathscr{G} }{\partial\omega_1^{a_1}\cdots\partial\omega_m^{a_m}}(\mathbf{w}_0)\frac{\mathbf{h}^{\mathbf{a}}}{\mathbf{a}!} + R_n,
\end{equation*}
where $R_n$ is the Lagrange Remainder
\begin{equation*}
R_n = \sum_{|\mathbf{a}|=n+1}\frac{\partial^{a_1+a_2+\cdots+a_m}\mathscr{G} }{\partial\omega_1^{a_1}\cdots\partial\omega_m^{a_m}}(\mathbf{w}_0+\theta\mathbf{h})\frac{\mathbf{h}^{\mathbf{a}}}{\mathbf{a}!}.
\end{equation*}

Then we define a space $V_n$ by
\begin{equation*}
    V_n := {\rm Span}\left\{\frac{\partial^{a_1+a_2+\cdots+a_m}\mathscr{G} }{\partial\omega_1^{a_1}\cdots\partial\omega_m^{a_m}}(\mathbf{w}_0):~|\mathbf{a}| \leq n\right\}.
\end{equation*}
The number of vectors $\mathbf{a}$ that satisfy $|\mathbf{a}| = n$ does not exceed $(n+1)^{m-1}$, and the number of vectors $\mathbf{a}$ that satisfy $|\mathbf{a}| \leq n$ does not exceed $(n+1)^m$, we will prove this in Lemma \ref{lemmanumber}. Thus, $\text{dim}(V_n)\leq (n+1)^m$.

On the other hand, assume $\Gamma \subset \mathcal{U}$ is a closed loop surrounding $\mathcal{W}$, and denote $$d = \sup_{\omega',\omega''\in \mathcal{W}}|\omega'-\omega''|,~ \delta = \inf_{\zeta \in \Gamma,\omega \in \mathcal{W}}|\zeta-\omega|.$$ By the Cauchy integral formula, for any $\mathbf{h}$ with $\mathbf{w}_0+\mathbf{h}\in \mathcal{W}$,
\begin{equation*}
\begin{aligned}
   &\inf_{g \in V_n}\|\mathscr{G}(\mathbf{w}_0+\mathbf{h})-g\|_{L^{\infty}(\tilde{I})} \leq \|R_n\|_{L^{\infty}(\tilde{I})} \\
   & =\sum_{|\mathbf{a}|=n+1}\left\|\frac{\mathbf{h}^{\mathbf{a}}}{\mathbf{a}!}\cdot
   \frac{\mathbf{a}!}{(2 \pi i)^{m}} \int_{\partial \Gamma} \frac{\mathscr{G}\left(\mathbf{w}_0+\theta\mathbf{h}\right)}{\prod_{i=1}^m(\zeta_i-\omega_i')^{a_i+1}} d \zeta_1 \wedge  \cdots \wedge  d \zeta_m \right\|_{L^{\infty}(\tilde{I})}\\
   & \leq \sum_{|\mathbf{a}|=n+1}\left|\frac{\mathbf{h}^{\mathbf{a}}}{\mathbf{a}!}\cdot\frac{\mathbf{a}!}{(2 \pi i)^{m}}\cdot \frac{\sup_{\omega \in \mathcal{W}}\|\mathscr{G}(\omega)\|_{L^{\infty}(\tilde{I})}m(\partial\Gamma)}{\delta^{|\mathbf{a}|+m}}\right|\\
   & \lesssim (n+2)^{m-1}\left(\frac{d}{\delta}\right)^n,
   \end{aligned}
\end{equation*}
where the constant $m(\partial\Gamma)$ is the measure of $\partial\Gamma$, and $\omega_i'$ is the $i$-th component of $\mathbf{w}_0+\theta\mathbf{h}$. 

We first assume that $d < \delta$, then there exists $c_0$ such that  $(n+2)^{m-1}(d/\delta)^n \lesssim {\rm e}^{-c_0n}$ and
\begin{equation*}
d_{(n+1)^m}(\widetilde{\mathcal{D}})_{L^{\infty}(\tilde{I})} \leq \sup_{\omega\in \mathcal{W}}\inf_{g\in V_n}\|\mathscr{G}(\omega)-g\|_{L^{\infty}(\tilde{I})} \lesssim {\rm e}^{-c_0n},
\end{equation*}
which further implies that
\begin{equation}\label{nwidth}
d_n(\widetilde{\mathcal{D}})_{L^{\infty}(\tilde{I})} \lesssim {\rm e}^{-c_0n^{\frac{1}{m}}}.
\end{equation}

If $d \geq \delta$, we divide $\mathcal{W}$ into several parts $\mathcal{W}_1,\ldots,\mathcal{W}_k$ such that the diameter of each part is less than $\delta$. Let $\mathcal{D}_i=\{\mathscr{G}(\mathcal{W}_i)\}$, then $\widetilde{\mathcal{D}} = \cup_{i=1}^k \mathcal{D}_i$. Then by our proof for \eqref{nwidth}, there exists $c_1,\ldots,c_k >0$ such that
\begin{equation*}
d_n\left(\mathcal{D}_i\right)_{L^{\infty}(\tilde{I})} \lesssim {\rm e}^{-c_in^{\frac{1}{m}}},~ i =1,\ldots,k.
\end{equation*}
It then follows from Lemma \ref{cupD} that
\begin{equation}\label{knwidthD}
d_{kn}(\widetilde{\mathcal{D}})_{L^{\infty}(\tilde{I})} \leq \max\{d_n\left(\mathcal{D}_i\right)_{L^{\infty}(I)},~i=1,\ldots,k\} \lesssim {\rm e}^{-c'n^{\frac{1}{m}}},
\end{equation}
where the constant $c'=\min\{c_i,~i=1,\ldots,k\}$ depends only on $\mathcal{W}$ and $\Gamma$, and therefore depends only on $\mathcal{W}$ and $\mathcal{U}$. Then \eqref{nwidthrate:a} is proved by \eqref{knwidthD}. 

On the other hand, combining $d_n(B_1(\mathcal{D}^{(1)}))_{L^{\infty}(I)}=d_n(\mathcal{D}^{(1)})_{L^{\infty}(I)}$ (see \cite{Lorentz1996}) and \eqref{nwidthrate:a} with Theorem \ref{widthentropy} completes the proof of \eqref{nwidthrate:b}.
\end{proof}

\begin{lemma}\label{lemmanumber}
For any $n\geq 0$, the number of vectors $\mathbf{a}$ that satisfy $|\mathbf{a}| = n$ does not exceed $(n+1)^{m-1}$, and the number of vectors $\mathbf{a}$ that satisfy $|\mathbf{a}| \leq n$ does not exceed $(n+1)^m$.  
\end{lemma}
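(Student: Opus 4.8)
The plan is to bound each count by exhibiting an injection into a Cartesian power $\{0,1,\ldots,n\}^{k}$ of the appropriate dimension, thereby avoiding any explicit binomial identities. I would first handle the case $|\mathbf{a}|=n$. The key observation is that a multi-index $\mathbf{a}=(a_1,\ldots,a_m)$ with $a_1+\cdots+a_m=n$ is completely determined by its first $m-1$ components, since the last one is recovered via $a_m=n-\sum_{i=1}^{m-1}a_i$. Hence the ``forgetful'' map $\mathbf{a}\mapsto(a_1,\ldots,a_{m-1})$ is injective, and because $0\le a_i\le n$ for each $i$, its image lies inside $\{0,1,\ldots,n\}^{m-1}$, a set of cardinality $(n+1)^{m-1}$. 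Counting the domain through this injection gives the first bound.

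For the case $|\mathbf{a}|\le n$, I would argue even more directly: any multi-index with $a_1+\cdots+a_m\le n$ automatically satisfies $0\le a_i\le n$ for every single coordinate, so the whole vector $\mathbf{a}$ sits in $\{0,1,\ldots,n\}^{m}$. This inclusion is trivially injective, which immediately yields the bound $(n+1)^m$. As an alternative that also reuses the first estimate, one may instead sum over the admissible values of the total degree and use $\sum_{k=0}^{n}(k+1)^{m-1}\le(n+1)\cdot(n+1)^{m-1}=(n+1)^m$, but the direct embedding is cleaner.

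I do not expect any genuine obstacle here, as the statement is purely combinatorial. The only point deserving a line of care is the injectivity of the map in the first case, which is forced by the fixed-sum constraint and is what lets us drop one power of $(n+1)$ relative to the crude bound. I would remark that the sharp stars-and-bars values $\binom{n+m-1}{m-1}$ and $\binom{n+m}{m}$ could be substituted, but the injection argument keeps the proof self-contained and free of binomial inequalities, which is all that Theorem~\ref{thmnwidth} needs for the estimate $\dim(V_n)\le(n+1)^m$.
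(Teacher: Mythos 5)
Your proof is correct, but it takes a genuinely different route from the paper's. The paper argues by induction on $m$: it defines $N(m,n)$ as the number of multi-indices with $|\mathbf{a}|=n$, derives the recurrence $N(m,n)=\sum_{i=0}^{n}N(m-1,i)\leq (n+1)N(m-1,n)$ by fixing the last coordinate and using the monotonicity $N(m,n)\leq N(m,n+1)$, concludes $N(m,n)\leq(n+1)^{m-1}$ inductively, and then obtains the second bound by summing, $\sum_{i=0}^{n}N(m,i)\leq\sum_{i=0}^{n}(i+1)^{m-1}\leq(n+1)^m$. You instead avoid induction altogether: for the fixed-sum case you inject the set of multi-indices into $\{0,1,\ldots,n\}^{m-1}$ by forgetting the last coordinate (which is recoverable from the constraint $a_m=n-\sum_{i<m}a_i$), and for the bounded-sum case you observe the trivial inclusion into $\{0,1,\ldots,n\}^{m}$. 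Your injection argument is shorter and arguably cleaner, especially for the second claim, where the direct embedding replaces the paper's summation estimate; the alternative you mention in passing (summing $(k+1)^{m-1}$ over $k\leq n$) is precisely the paper's treatment of that part. Both arguments are elementary and deliver exactly the bound $\dim(V_n)\leq(n+1)^m$ needed in Theorem~\ref{thmnwidth}, so the difference is purely one of technique.
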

\begin{proof}
For $m\geq 1$ and $n\geq 0$, denote the number of vectors $\mathbf{a}$ such that $a_1+\ldots+a_m = n$ by $N(m,n)$. It is clear that $N(m,0)=1$, $ N(1,n) = 1$, $N(m,n)\leq N(m,n+1).$
By fixing $a_m$, we have the recurrence formula:
\begin{equation*}
    N(m,n) = \sum_{i=0}^n N(m-1,i) \leq (n+1)N(m-1,n).
\end{equation*}
Thus, by induction, $N(m,n) \leq (n+1)^{m-1}$, which implies that the first statement holds. Besides, 
\begin{equation*}
    \sum_{i=0}^n N(m,i) \leq \sum_{i=0}^n (i+1)^{m-1} \leq (n+1)\cdot(n+1)^{m-1} \leq (n+1)^m,
\end{equation*}
which implies the second statement.
\end{proof}

Finally, combining Theorems \ref{widthentropy} and \ref{thmnwidth}, we obtain sub-exponential decay of  $\varepsilon_n(B_1(\mathcal{D}))_{L^\infty(I)}$ for the dictionary  \eqref{rationaldictionary} in the next corollary.
\begin{corollary}\label{thmentropy}
Let $\mathcal{D}$ be defined in  \eqref{rationaldictionary}. For any $1\leq p\leq \infty$ we have 
\begin{equation*}\label{entropyrate2}
\varepsilon_n\left(B_1(\mathcal{D})\right)_{L^p(I)} \lesssim {\rm e}^{-C_6n^{\frac{1}{2}}},
\end{equation*}
where the constant $C_6$ are independent of $n$.
\end{corollary}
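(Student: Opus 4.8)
The plan is to exhibit $\mathcal{D}$ as the image of a \emph{one}-dimensional analytic parameterization over a bounded convex set and then apply Theorem~\ref{thmnwidth} with $m=1$, finally upgrading the $L^\infty$-bound to an $L^p$-bound. The obstruction to using Theorem~\ref{thmnwidth} directly is that the native parameter $b$ in \eqref{rationaldictionary} ranges over the \emph{unbounded} interval $(0,\infty)$, whereas that theorem (through the quantities $d$, $\delta$ and the surrounding contour $\Gamma$ in its proof) requires a bounded convex parameter domain. I would therefore first compactify by the M\"obius change of variable $t=1/(1+b)$, which is a bijection from $(0,\infty)$ onto $\mathcal{W}=(0,1)$. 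Under it $g(x,b)=\frac{\eta+b}{x+b}$ becomes $\tilde g(x,t)=\frac{1+(\eta-1)t}{1+(x-1)t}$, and the reparametrized dictionary $\widetilde{\mathcal{D}}=\mathscr{G}(\mathcal{W})$ with $\mathscr{G}(t)=\tilde g(\bullet,t)$ coincides with $\mathcal{D}$ as a set of functions, so that $B_1(\widetilde{\mathcal{D}})=B_1(\mathcal{D})$.

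Next I would verify the analyticity hypothesis of Theorem~\ref{thmnwidth}. For each fixed $x\in I=[\eta,1]$, the map $t\mapsto\tilde g(x,t)$ is rational with a single finite pole at $t=1/(1-x)$ (and no finite pole when $x=1$). As $x$ ranges over $[\eta,1]$, these poles lie in $[\tfrac{1}{1-\eta},+\infty)$, hence at distance at least $\tfrac{1}{1-\eta}-1=\tfrac{\eta}{1-\eta}>0$ from the closed interval $[0,1]$, \emph{uniformly} in $x$. Consequently $\mathscr{G}$ admits an analytic continuation to a fixed complex neighborhood $\mathcal{U}=\{t\in\mathbb{C}:\operatorname{dist}(t,[0,1])<\rho\}$ of $\overline{\mathcal{W}}$ with any $\rho<\tfrac{\eta}{1-\eta}$, on which the denominators are bounded away from zero and hence $\sup_{t\in\mathcal{U}}\|\mathscr{G}(t)\|_{L^\infty(I)}<\infty$. (The normalization $\|g\|_{L^\infty(I)}=1$ is automatic here, since $g(\bullet,b)$ is decreasing on $[\eta,1]$ with maximum $g(\eta,b)=1$.) This places us exactly in the setting of Theorem~\ref{thmnwidth} with $m=1$.

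Applying Theorem~\ref{thmnwidth} then yields $\varepsilon_n(B_1(\mathcal{D}))_{L^\infty(I)}=\varepsilon_n(B_1(\widetilde{\mathcal{D}}))_{L^\infty(I)}\lesssim \mathrm{e}^{-C_5 n^{1/(m+1)}}=\mathrm{e}^{-C_5 n^{1/2}}$. To reach every $1\le p\le\infty$, I would pass from $L^\infty$ to $L^p$ by the elementary inclusion of balls: since $I$ is bounded with $|I|=1-\eta<1$, we have $\|f\|_{L^p(I)}\le |I|^{1/p}\|f\|_{L^\infty(I)}\le\|f\|_{L^\infty(I)}$ for all $p$, so an $L^\infty$-cover of $B_1(\mathcal{D})$ by $2^n$ balls of radius $\varepsilon$ is also an $L^p$-cover by $2^n$ balls of radius $\le\varepsilon$. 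Hence $\varepsilon_n(B_1(\mathcal{D}))_{L^p(I)}\le \varepsilon_n(B_1(\mathcal{D}))_{L^\infty(I)}\lesssim \mathrm{e}^{-C_6 n^{1/2}}$ with a constant $C_6$ independent of both $n$ and $p$, which is the claim.

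The main obstacle is the compactification step rather than the concluding estimate: one must choose a change of variable that simultaneously renders the parameter domain bounded and convex \emph{and} keeps the singularities of the analytic continuation at a uniform positive distance from that domain, since this distance is precisely what controls $\delta$ and thus the rate constant produced by Theorem~\ref{thmnwidth}. The M\"obius substitution above does both, and I would take care to record the uniform-in-$x$ pole estimate explicitly. The remaining $L^\infty\!\to\!L^p$ transfer is routine.
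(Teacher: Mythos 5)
Your proposal is correct, and it reaches the same conclusion by a genuinely cleaner route at one structural point. The paper's proof also opens with the reduction to $p=\infty$ via $\|f\|_{L^p(I)}\lesssim\|f\|_{L^\infty(I)}$ and also rests entirely on Theorem~\ref{thmnwidth} with $m=1$, but it handles the unbounded parameter range by \emph{splitting} the dictionary: $\mathcal{D}^{(1)}=\mathcal{D}((0,1])$ is treated in the variable $b$ itself (analytic on the disk $B_{1+\eta/2}(1)$, which avoids the poles $b=-x\in[-1,-\eta]$), while $\mathcal{D}^{(2)}=\mathcal{D}([1,\infty))$ is first inverted via $\tilde b=1/b$ to the dictionary $\{(\tilde b\eta+1)/(\tilde b x+1)\}_{\tilde b\in(0,1]}$, analytic on $B_{3/2}(1)$; the two pieces are then recombined through the inclusion $B_1(\mathcal{D})\subset B_1(\mathcal{D}^{(1)})+B_1(\mathcal{D}^{(2)})$ and the subadditivity property \eqref{lemmaLorentz}, at the cost of halving $n$ and taking a minimum of the two constants. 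Your single M\"obius chart $t=1/(1+b)$ compactifies all of $(0,\infty)$ at once, with the uniform pole-distance bound $\mathrm{dist}\bigl(1/(1-x),[0,1]\bigr)\geq\eta/(1-\eta)$ playing the role of the paper's two separate analyticity checks; this lets one invocation of Theorem~\ref{thmnwidth} suffice and removes any need for the convex-hull sum decomposition and \eqref{lemmaLorentz}. Both arguments yield the rate ${\rm e}^{-Cn^{1/2}}$ (in the paper's version the $n^{1/2}$ rate of the far piece dominates the recombination in any case), and your $L^\infty\!\to\!L^p$ transfer is exactly the paper's one-line reduction. The one point worth recording explicitly in a final write-up is the one you already flag: Theorem~\ref{thmnwidth} must be applied with $\mathcal{U}$ a uniform complex tube around the \emph{closure} $[0,1]$ of $\mathcal{W}$, so that a surrounding contour at positive distance from $\mathcal{W}$ exists and $\mathscr{G}$ stays bounded on it; your uniform lower bound on the denominator, $|1+(x-1)t|\geq 1-(1-\eta)(1+\rho)>0$ for $\rho<\eta/(1-\eta)$, supplies precisely this and closes the argument.
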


\begin{proof}
It suffices to prove the theorem with $p=\infty$ because of the simple relation $\|f\|_{L^p(I)}\lesssim\|f\|_{L^\infty(I)}$. First
we analyze the part $\mathcal{D}^{(1)}=\mathcal{D}((0,1])=\left\{g(\bullet,b)\right\}_{b \in (0,1]}$ of $\mathcal{D}=\mathcal{D}((0,\infty))$. Let $B_{1+\frac{\eta}{2}}(1)$ be the {disk} centered at $1$ with a radius of $(1+\eta/2)$ on the complex plane, where $\eta$ is the left endpoint of $I$. 
Since $(0,1] \subset B_{1+\frac{\eta}{2}}(1)$ and $g(\bullet,b)$ is analytic in $B_{1+\frac{\eta}{2}}(1)$, by Theorem \ref{thmnwidth} we directly have
\begin{equation}\label{entropyconludeD(1)}
\varepsilon_{n}(B_1(\mathcal{D}^{(1)}))_{L^{\infty}(I)}  \lesssim {\rm e}^{-C^{(1)}n}.    
\end{equation}
where the constant $C^{(1)}>0$ {is independent of} $n$.

Next we consider another part $\mathcal{D}^{(2)}=\mathcal{D}([1,+\infty))$ of $\mathcal{D}$. By replacing $b$ with $1/\tilde{b}$, it is equivalent to the dictionary $$\mathcal{D}^*:=\left\{h(\bullet,\tilde{b}):h(x,\tilde{b})=\frac{\tilde{b}\eta+1}{\tilde{b}x+1}\right\}_{\tilde{b} \in (0,1]}.$$
Since $h(\bullet,\tilde{b})$ is analytic in $B_{\frac{3}{2}}(1)$, there also exists a constant $C^{(2)}>0$ such that
\begin{equation}\label{entropyconludeD(2)}
\varepsilon_{n}(B_1(\mathcal{D}^{(2)}))_{L^{\infty}(I)} =\varepsilon_{n}(B_1(\mathcal{D}^*))_{L^{\infty}(I)} \lesssim {\rm e}^{-C^{(2)}n^{\frac{1}{2}}}.  
\end{equation}
Since $B_1(\mathcal{D}) \subset B_1(\mathcal{D}^{(1)})+ B_1(\mathcal{D}^{(2)})$, by substituting \eqref{entropyconludeD(1)} and \eqref{entropyconludeD(2)} into \eqref{lemmaLorentz}, we obtain
\begin{equation*}
\begin{aligned}
\varepsilon_{n}(B_1(\mathcal{D}))_{L^{\infty}(I)}  &\leq \varepsilon_{n/2}(B_1(\mathcal{D}^{(1)}))_{L^{\infty}(I)} +\varepsilon_{n/2}(B_1(\mathcal{D}^{(2)}))_{L^{\infty}(I)}\lesssim {\rm e}^{-C_6n^{\frac{1}{2}}},  
\end{aligned}    
\end{equation*}
where $C_6 = \min\{C^{(1)}/\sqrt2,C^{(2)}/\sqrt2\}$ is independent of $n$.
\end{proof}

\section{Numerical Experiments}\label{sec:num}
In this section, we test the performance of the rEIM for solving fractional PDEs. Given an upper bound $\Lambda$ of $\lambda_{\max}(\mathcal{A}_h)$, we choose to replace $\mathcal{A}_h$ and $f_h$ with $\mathcal{A}_h/\Lambda$ and $f_h/\Lambda^s$ in section \ref{subsec:Fractional} without change the numerical solutions $u_h$. Correspondingly, the rEIM in section \ref{subsec:Fractional} is applied to the target functions $x^{-s}$ over the rescaled  interval $[\eta,1]$, where $0 < \eta \leq\lambda_{\min}(\mathcal{A}_h)/\Lambda$. 

For the evolution fractional PDE in section \ref{subsec:adaptive}, the rescaled problem is 
\begin{equation*}
u_h^m=\sum_{i=1}^nc_{i,m}\left(\frac{\mathcal{A}_h}{\Lambda}+b_iI_h\right)^{-1}\frac{1}{\Lambda^s}\left(\frac{1}{\tau_m}u_h^{m-1}+f_h^m\right), \quad m=1, 2, \ldots,
\end{equation*}
where $\{c_{i,m}\}_{1\leq i\leq n}$, $\{b_i\}_{1\leq i\leq n}$ are determined by the rEIM interpolant
$\sum_{i=1}^n\frac{c_{i,m}}{x+b_i}$ of $\frac{1}{x^s+1/(\tau_m\Lambda^s)}$ over $[\eta,1]$. {Here $c_{i,m}$ depends on $s$ while $b_i$ is invariant as $s$ changes.}

{In each experiment, we implement Algorithm \ref{alg:rEIM} based on an unnormalized dictionary $\mathcal{D}(\mathcal{B})=\left\{g(\bullet,b)\in C(I): g(x,b)=\frac{1}{x+b},~b\in\mathcal{B}\right\}$ with slight abuse of notation. For simplicity, we use MATLAB's backslash `\textbackslash
' to evaluated the action of $\mathbb{G}_m^{-1}$ on a vector $\mathbf{v}$ in $\mathbb{R}^m$. It is observed that the numerical error of $\mathbb{G}_m^{-1}\mathbf{v}$ by MATLAB's backslash is well below $10^{-9}$. As a result, the numerical accuracy of Algorithm \ref{alg:rEIM} is not significantly affected by the large condition number of $\mathbb{G}_m$.}

The numerical accuracy of the rEIM as well as other rational approximation algorithms depends on fine tuning of discretization parameters such as the choice of the finite dictionary $\mathcal{D}(\mathcal{B})\subset\mathcal{D}((0,\infty))$ and the candidate set $\Sigma\subset(0,\infty)$ of {interpolation} points in Algorithm \ref{alg:rEIM}. {Due to the singularity of target functions at the origin, we choose to sample the dictionary $\mathcal{D}(\mathcal{B})$ and the set $\Sigma$ in a nonuniform way, i.e., $\mathcal{D}(\mathcal{B})$ and $\Sigma$ are increasingly denser as $b\in\mathcal{B}$ and $x\in\Sigma$ getting closer to zero. 
In our codes,  $\mathcal{B}$ and $\Sigma$ are tuned to optimize the performance of the rEIM.  
Based on our numerical experience, we set the range of $\mathcal{B}$ such that $\|\mathcal{D}(\mathcal{B})\|_{L^{\infty}(I)}$ is no less than the $L^\infty$ norm of target functions.} Interested {readers are referred} to the repository \href{https://github.com/yuwenli925/REIM}{github.com/yuwenli925/REIM} for implementation details of the practical rational approximation algorithms under numerical investigation.

\begin{figure}[thp]
\begin{minipage}{1.0\linewidth}
    \centering
    \includegraphics[width=.3\linewidth]{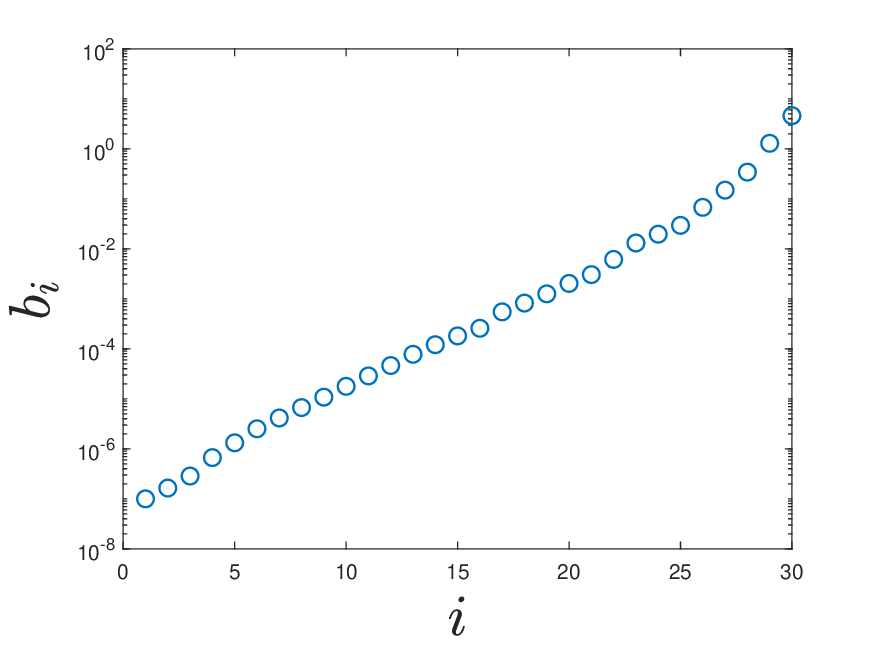}
    \includegraphics[width=.3\linewidth]{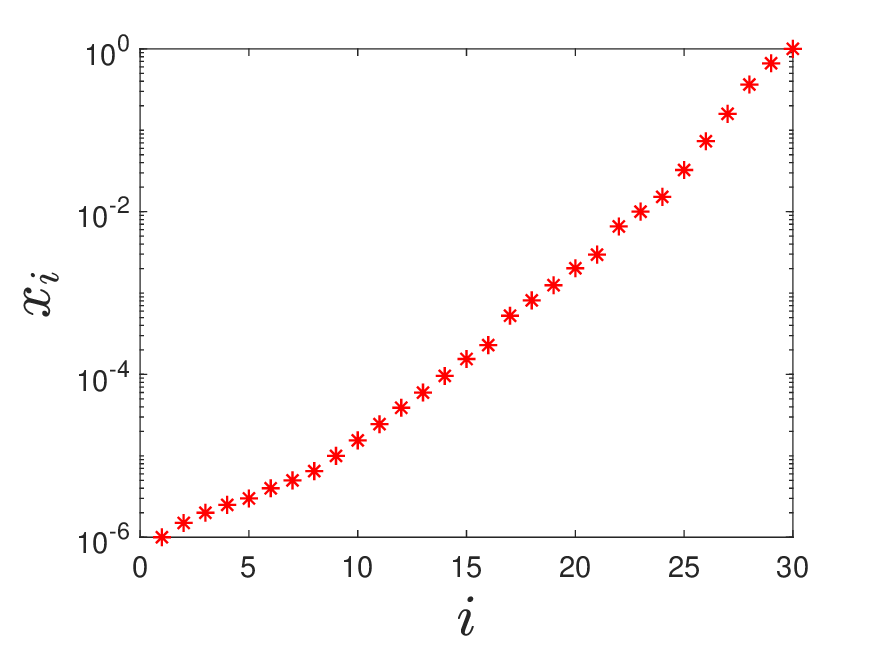} \includegraphics[width=.3\linewidth]{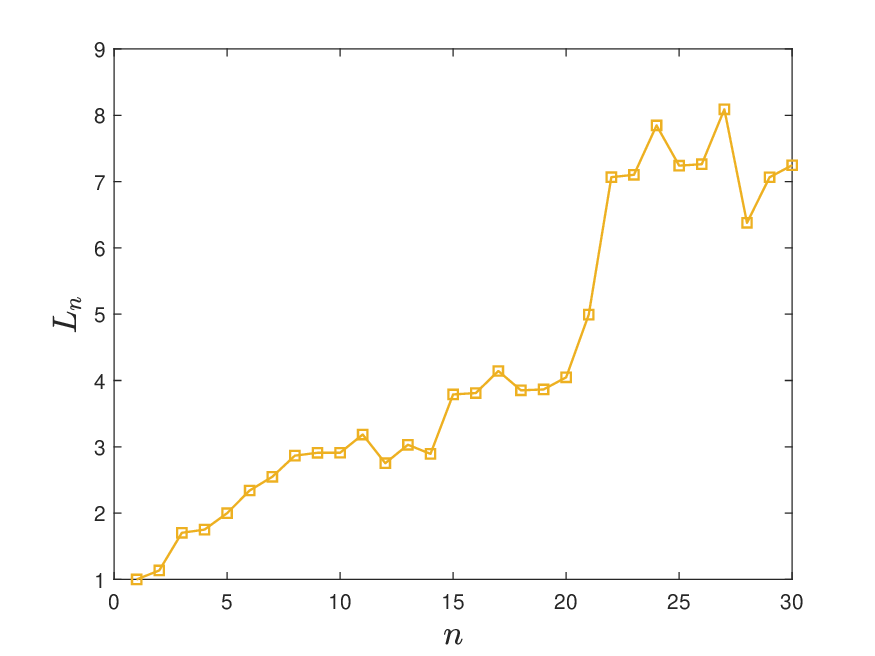}\vspace{8pt} 
    \caption{Opposite poles $b_i$ in the rEIM (left); rEIM interpolation points $x_i$ (middle); Lebesgue constant $L_n$ (right).}
    \label{fig:polesort}
\end{minipage}
\end{figure} 

\begin{figure}[thp]
\begin{minipage}{1.0\linewidth}
    \centering
    \includegraphics[width=.4\linewidth]{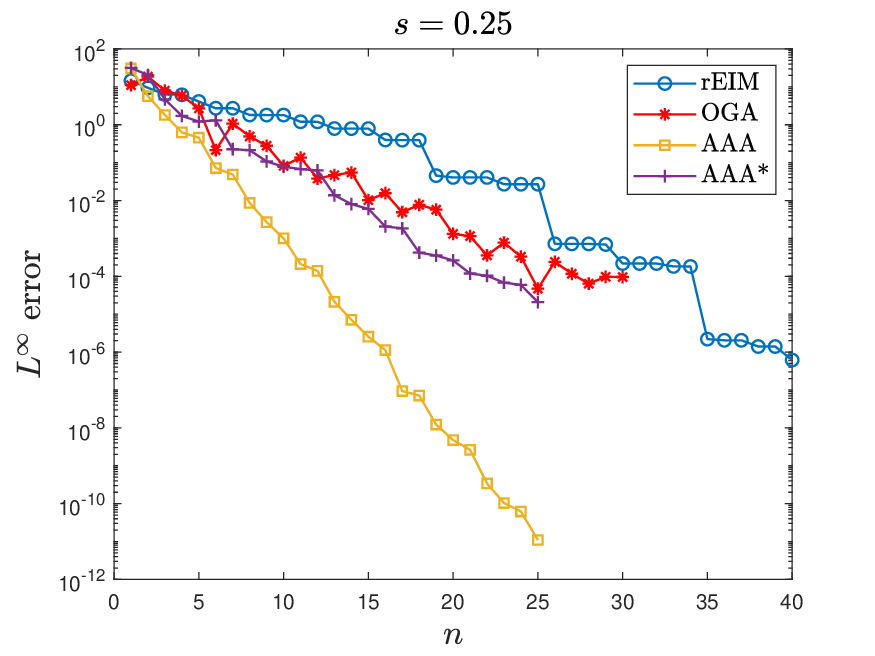}\hspace{8pt}
    \includegraphics[width=.4\linewidth]{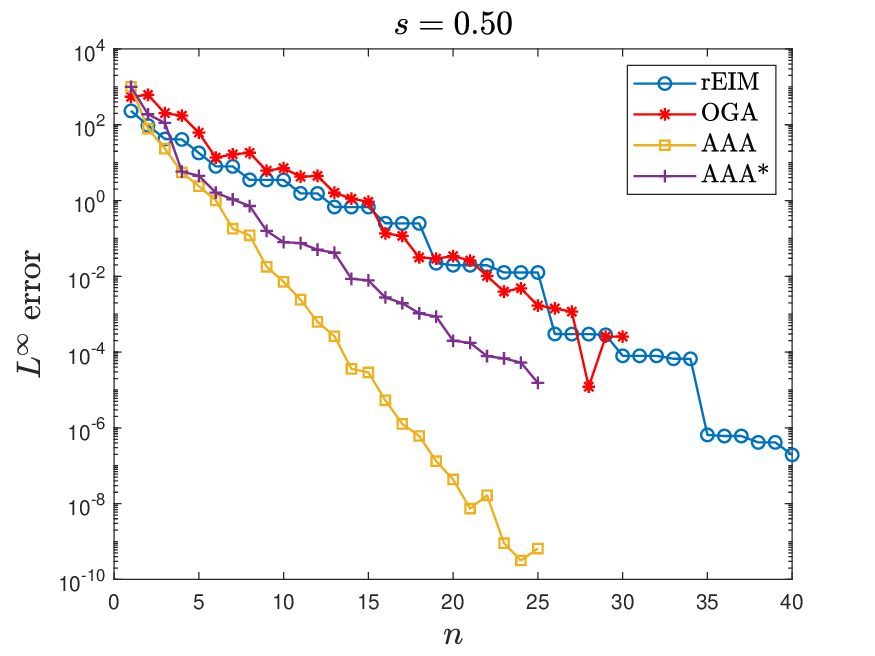}\vspace{12pt} \includegraphics[width=.4\linewidth]{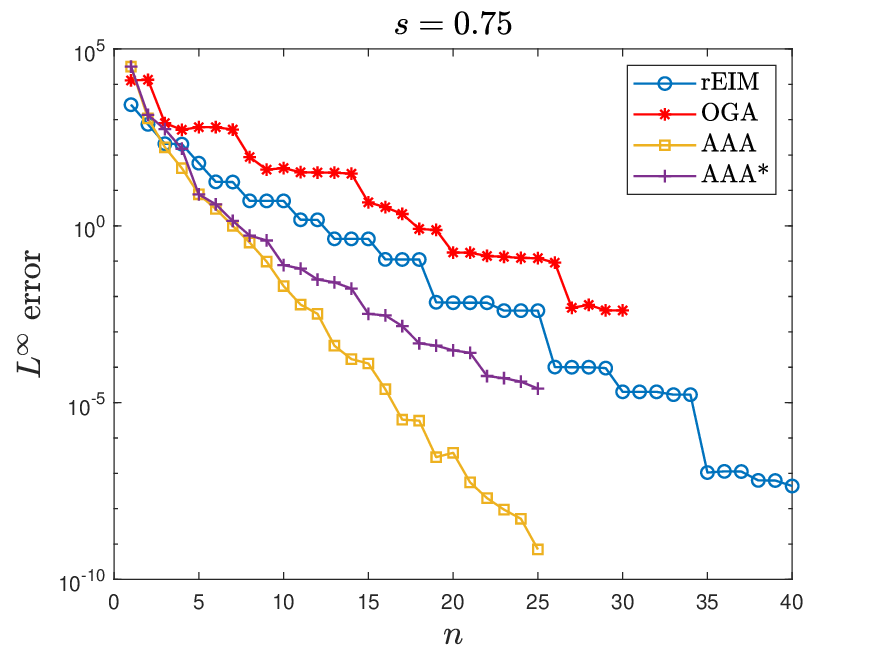}\hspace{8pt}
    \includegraphics[width=.4\linewidth]{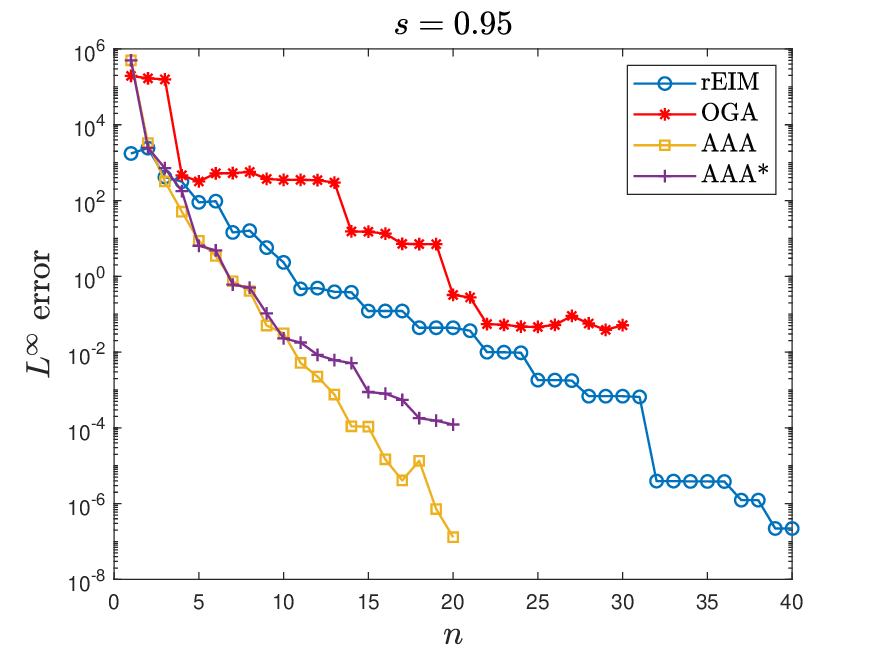}
    \caption{Maximum norm errors of rational approximation algorithms for $x^{-s}$ on $[10^{-6},1]$. }
\label{fig:xalpha}
\end{minipage}
\end{figure}

\subsection{Approximation of Power Functions}\label{subsec:xalpha}
We start with a numerical comparison of the rEIM, the OGA, and the popular AAA rational approximation algorithm for the target function $x^{-s}$ over $[10^{-6},1]$. 
{In this experiment, we have tuned the set of sample points in the AAA algorithm to improve its accuracy as much as possible.} 
Figure \ref{fig:polesort} shows the distribution of sorted poles $-b_1, \ldots, -b_{30}$ and interpolation points $x_1, \ldots, x_{30}$ used in the rEIM. An interesting phenomenon is that the poles and interpolation points are both exponentially clustered at 0. From Figure \ref{fig:polesort} (right), we observe that the Lebesgue constant $L_n$ of $\Pi_n$ grows slowly.


It is shown in  Figure \ref{fig:xalpha} that the AAA algorithm achieves the highest level of accuracy under the same number of iterations. However, the output of AAA is a barycentric representation of rational functions, which should be converted into the form $\sum_{i=1}^n\frac{c_i}{x+b_i}$ {by solving a generalized eigenvalue problem. The composition of AAA and the additional generalized eigen-solver is denoted by AAA*.} Unfortunately, AAA* is significantly less accurate than the original AAA. It is also {observed from} Figure \ref{fig:xalpha} that errors of AAA and AAA* do not decay after 22-25 iterations while the error of the rEIM is finally smaller than AAA*. {In this experiment, the rEIM eventually achieves higher accuracy than the AAA* algorithm under double precision arithmetic.}

\subsection{Fractional Laplacian on Uniform Grids}\label{subsecFLUniform}
On $\Omega = (-1,1)^2$ we consider the fractional Laplacian 
\begin{equation}\label{fractionalLaplacian}
 (-\Delta)^su = 1
\end{equation}
with $u=0$ on $\partial\Omega$.
The reference exact solution is computed by
\begin{equation*}
u\approx\sum_{j,k=1}^{j^2+k^2\leq4\times10^6}\lambda_{jk}^{-s}(1,u_{jk})u_{jk},
\end{equation*}
where $u_{jk}$ is the $L^2$ normalized eigenfunction of $-\Delta$ associated to the eigenvalue $\lambda_{jk}=(j^2+k^2)\pi^2/4$. The fractional Laplacian is reduced by the rEIM in section \ref{subsec:xalpha} with $n=30$, $I=[10^{-6},1]$ to a series of {integer-order} problems \eqref{integerproblem}, which is further solved by finite difference on a uniform grid with mesh size $h_i=2^{-(i+3)},~i=1, ..., 5$. In this case, $\Lambda=10^6$ and $\eta=10^{-6}$ is enough to lower bound $\lambda_{\min}(\mathcal{A}_h)/\Lambda$.  The errors $e_i=\|u-u_{h_i}\|_{L^2(\Omega)}$ for different values of $s$ are recorded in Table \ref{tab:gridwidth} with order of convergence
\begin{equation*}
    \text{order}_{i+1}:= \log\left(\frac{e_{i+1}}{e_i}\right)\bigg{/}\log\left(\frac{h_{i+1}}{h_i}\right),\quad i=1,2,3,4.
\end{equation*} 
In fact, convergence rates in Table \ref{tab:gridwidth} are consistent with the theoretical convergence rate $O(h^{\min\{2,2s+0.5\}})$ in \cite{BonitoPasciak2015}.
 
\begin{table}[thp]
  
  \caption{$L^2$ errors and convergence rates on uniform girds.}
  \resizebox{\columnwidth}{!}{
    \begin{tabular}{|c||c|c||c|c||c|c||c|c|}
    \toprule
    \hline
    \multicolumn{1}{|c||}{\multirow{2}[4]{*}{$h_i$}} & \multicolumn{2}{c||}{$s=0.25$} & \multicolumn{2}{c||}{$s=0.5$} & \multicolumn{2}{c||}{$s=0.75$} & \multicolumn{2}{c|}{$s=0.95$} \\
\cline{2-9}    \multicolumn{1}{|c||}{} & $L^2$ error & order & $L^2$ error & order & $L^2$ error & order & $L^2$ error & order \\
    \hline
    $2^{-4}$ & $9.7461\times 10^{-3}$ &  ---  & $4.8415\times 10^{-3}$
 &  ---  & $2.1959\times 10^{-3}$
 & ---  & $1.2359\times 10^{-3}$ & --- \\
    \hline
    $2^{-5}$ & $4.6362\times 10^{-3}$ & 1.0719 & $1.6187\times 10^{-3}$ & 1.5806 & $5.8485\times 10^{-4}$ & 1.9087 & $3.1211\times 10^{-4}$ & 1.9855 \\
    \hline
   $2^{-6}$ & $2.2817\times 10^{-3}$ & 1.0229 & $5.4426\times 10^{-4}$ & 1.5725 & $1.5303\times 10^{-4}$ & 1.9342 & $7.8298\times 10^{-5}$ & 1.995 \\
    \hline
    $2^{-7}$ & $1.0939\times 10^{-3}$ & 1.0606 & $1.8480\times 10^{-4}$ & 1.5583 & $3.9673\times 10^{-5}$ & 1.9476 & $1.9599\times 10^{-5}$ & 1.9982 \\
    \hline
    $2^{-8}$ & $4.7034\times 10^{-4}$ & 1.2177 & $6.2553\times 10^{-5}$ & 1.5628 & $1.0226\times 10^{-5}$ & 1.9559 & $4.9019\times 10^{-6}$ & 1.9993 \\
    \bottomrule
    \end{tabular}%
    }
  \label{tab:gridwidth}%
\end{table}%

\begin{figure}[thp]
\begin{minipage}{1.0\linewidth}
    \centering
    \includegraphics[width=.32\linewidth]{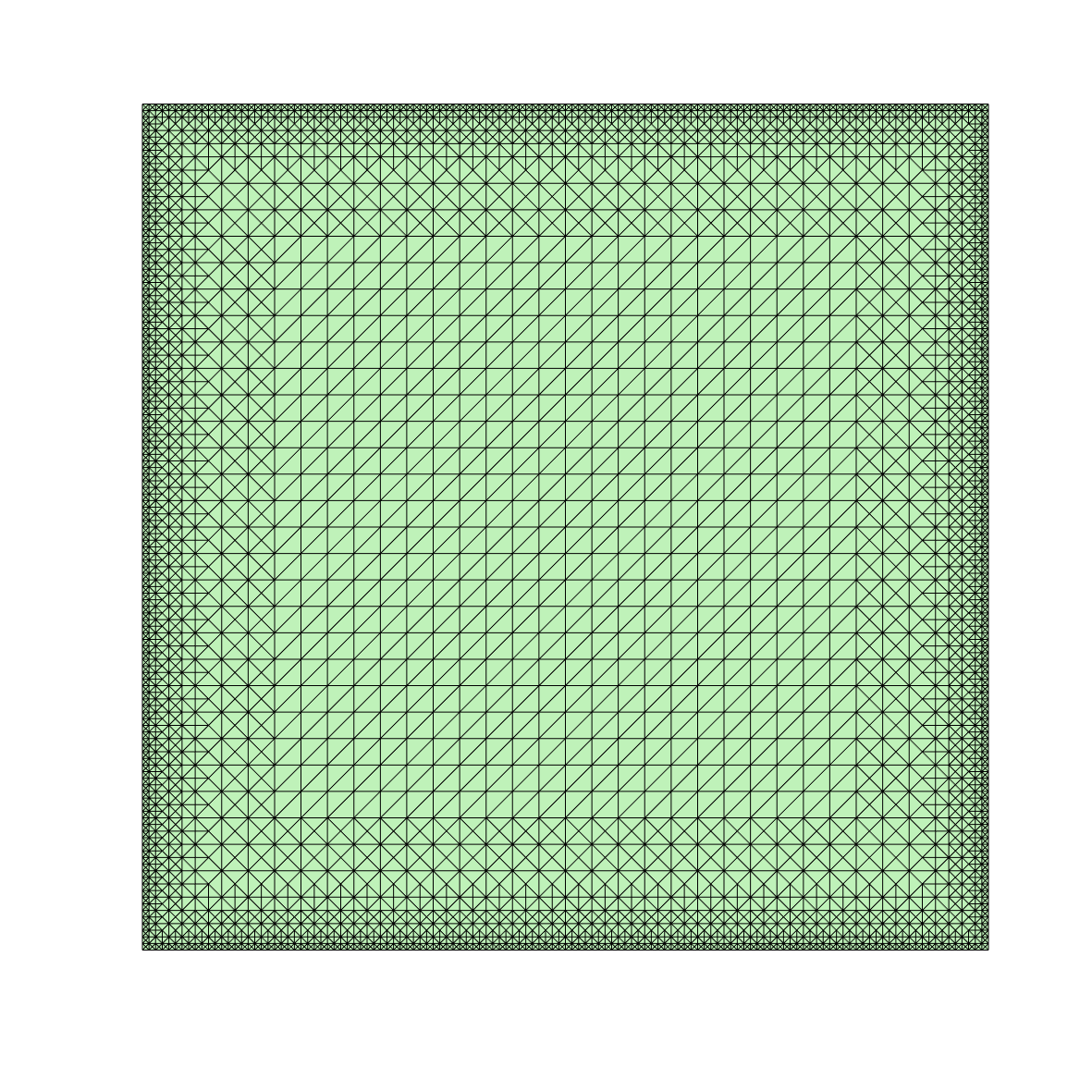}
    \includegraphics[width=.32\linewidth]{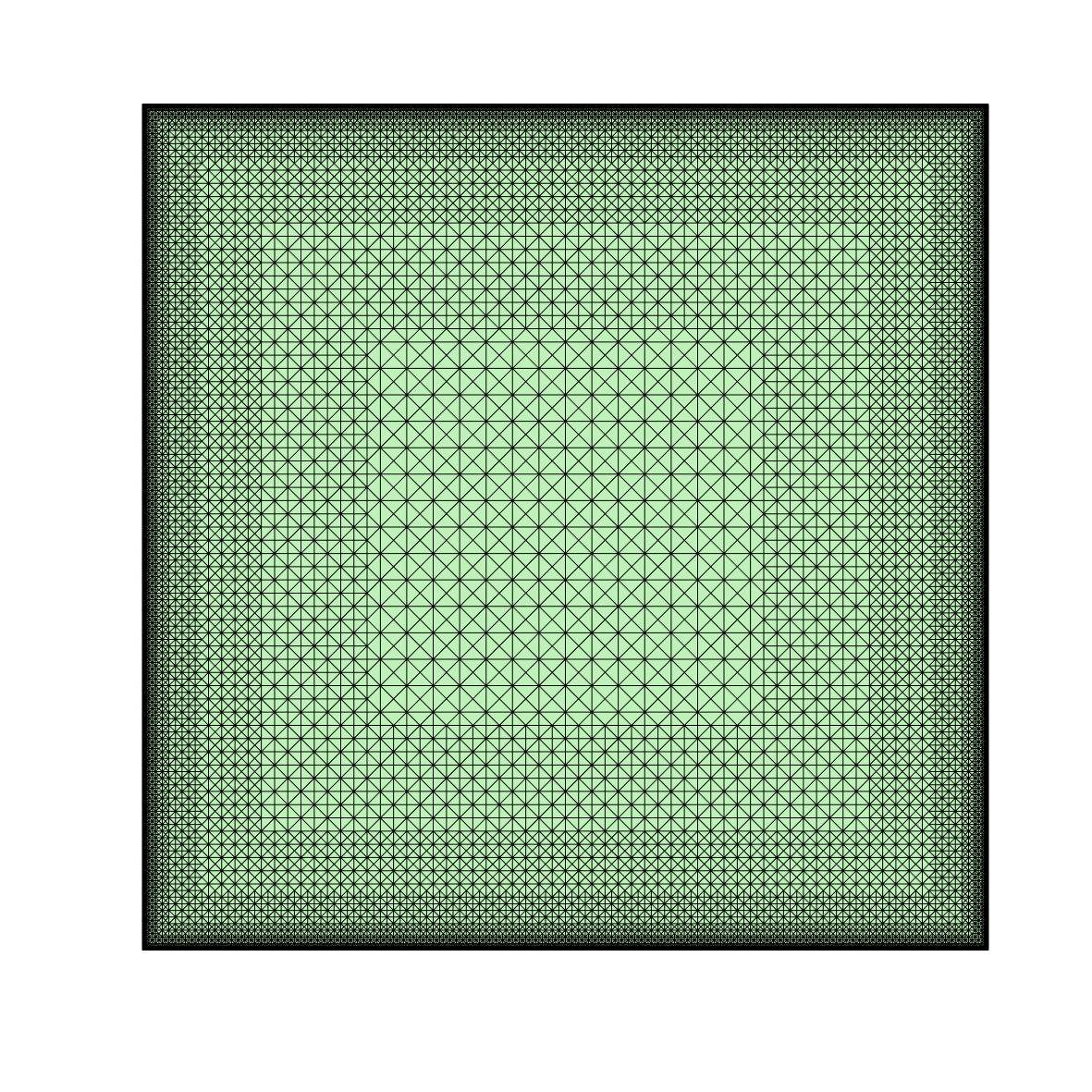} 
    \caption{Graded grid $\mathcal{T}_1$ with 4225 vertices (left); graded grid $\mathcal{T}_2$ with 19585 vertices (right).}
\label{fig:gradedmesh}\end{minipage}
\end{figure} 

\subsection{Fractional Laplacian on Graded Grids}\label{subsecFLgraded} On uniform grids, convergence rates of finite difference $L^2$ errors for fractional Laplacian are slower than $O(h^2)$ when $s<0.75$, see Table \ref{tab:gridwidth}. To improve the numerical accuracy, we test the performance of the rEIM-based solver on a sequence of graded grids designed appropriately to resolve the boundary singularity. It is clear that adaptive mesh refinement is not applicable to rectangular meshes without introducing hanging nodes. Therefore, we discretize the fractional Laplacian \eqref{fractionalLaplacian} with $s=0.25$
by linear finite elements on locally refined triangular meshes. Let $\Omega$ be partitioned by the uniform mesh $\mathcal{T}_0$ with mesh size $h=0.25$ in each direction. Let $N(\mathcal{T})$ denote the number of vertices in $\mathcal{T}$ and $C_T$ the barycenter of a triangle $T$. For $j=1, 2, 3$, we set $\widetilde{\mathcal{T}}_j=\mathcal{T}_{j-1}$ {and} successively mark and refine those triangles $T\in\widetilde{\mathcal{T}}_j$ {that} satisfy $${\rm area}(T)>\frac{6}{N(\widetilde{\mathcal{T}}_j)}\log_{10}(N(\widetilde{\mathcal{T}}_j)){\rm dist}(C_T,\partial\Omega).$$
This loop terminates when $\widetilde{\mathcal{T}}_j$ fulfils  $N(\widetilde{\mathcal{T}}_j)\geq4^j\times10^3$ and we then set $\mathcal{T}_j=\widetilde{\mathcal{T}}_j$, see Figure \ref{fig:gradedmesh} for $\mathcal{T}_1$ and $\mathcal{T}_2$.

Recall that $\mathbb{M}$ and $\mathbb{A}$ are linear finite element stiffness and mass matrices, respectively. The maximum eigenvalue $\lambda_{\max}(\mathcal{A}_h)=\lambda_{\max}(\mathbb{M}^{-1}\mathbb{A})$ on highly contrast meshes grows faster than uniform mesh sequences. Thus we set the eigenvalue upper bound as $\Lambda=10^8$ and generate rEIM rational approximants over $[10^{-8},1]$, see Figure \ref{fig:EIMerrorgraded} (left). In this case, $x^{-s}$ has greater singularity and more rEIM iterations are needed to achieve the same accuracy as in section \ref{subsecFLUniform}. 
From Figure \ref{fig:EIMerrorgraded} (right), we observed that the FEM on graded grids $\{\mathcal{T}_j\}_{1\leq j\leq 4}$ is able to achieve higher-order convergence than the uniform-grid based FEM.

\begin{figure}[thp]
\begin{minipage}{1.0\linewidth}
    \centering
    \includegraphics[width=.4\linewidth]{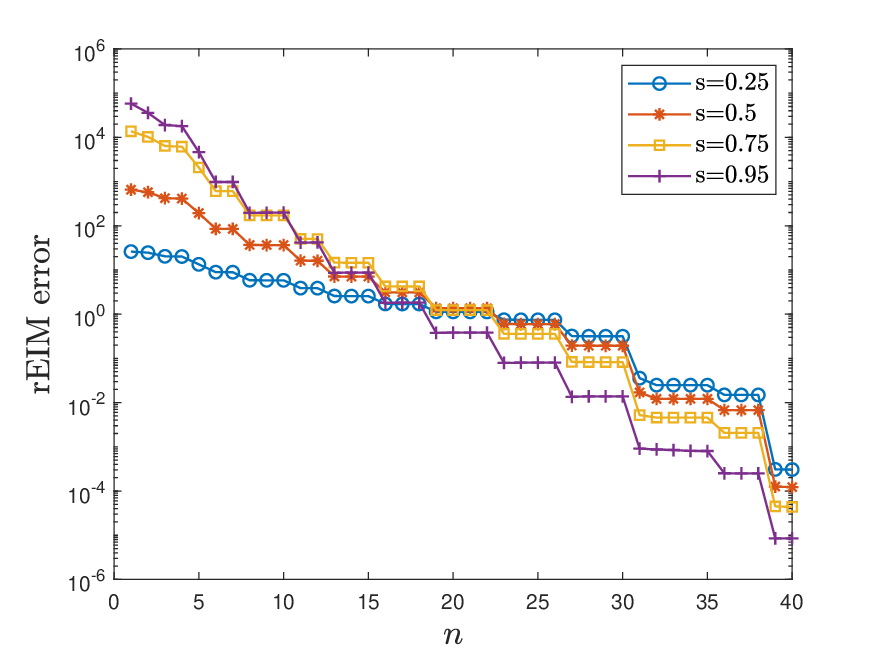}
    \includegraphics[width=.4\linewidth]{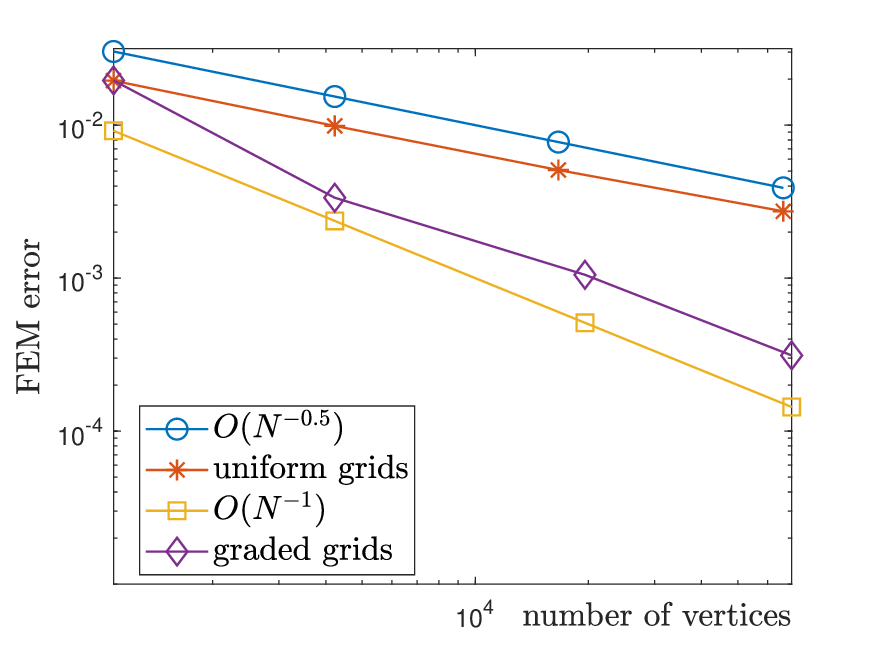} 
    \caption{rEIM $L^\infty$ error for $x^{-s}$ on $[10^{-8},1]$ (left); FEM $L^2$ errors for $s=0.25$, $N$ is the number of grid vertices (right).}
\label{fig:EIMerrorgraded}\end{minipage}
\end{figure} 

\begin{figure}[thp]
\begin{minipage}{1.0\linewidth}
    \centering
    \includegraphics[width=.4\linewidth]{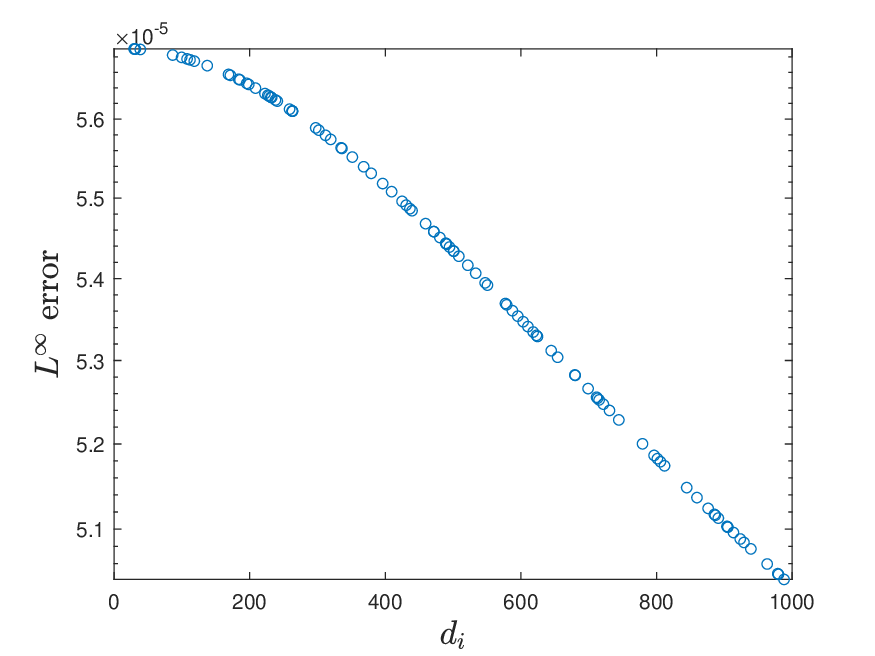}
    \includegraphics[width=.4\linewidth]{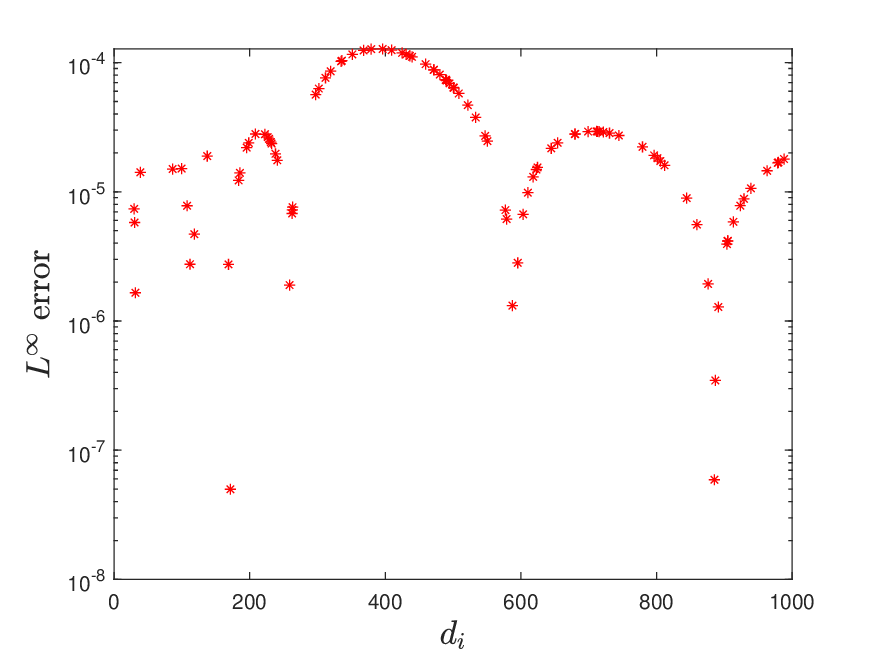} 
    \caption{$L^\infty$ error of the rEIM for $F_{0.5}$ (left) and $F_{1}$ (right).}
\label{fig:EIMerrortime}\end{minipage}
\end{figure} 

\subsection{Adaptive Step-Size Control for Fractional Heat Equations}\label{expfractionalheat}
On $\Omega=(0,1)^2$, we consider the fractional parabolic equation \eqref{heatFL} with $\mathcal{A}=-\Delta$ and the exact solution
\begin{equation}\label{uexact}
    u(t,x,y)={\rm e}^{-t/20}\cos(2\pi t)\sin(\pi x)\sin(\pi y).
\end{equation}
This problem with $s=0.5$ and $s=1$ is solved by the linear FEM on a uniform triangular mesh of mesh size $h=2^{-8}$. The upper bound of eigenvalues of $\mathcal{A}_h$ is  again $\Lambda=10^6$ and $I=[10^{-6},1]$.  Given the error tolerance $tol=10^{-4}$ and $\tau_0=10^{-3}$, we predict a new time step size of the implicit Euler method by the criterion (cf.~\cite{hairer1987solving})
\begin{equation}\label{taunew}
    \tau_{\text{new}} = 0.8\tau_m\left(\frac{tol}{err_m}\right)^{1/2},
\end{equation} 
where $err_m$ is an error estimator described in section \ref{subsec:adaptive}.
If $err_{m+1}\leq tol$, we accept $\tau_{m+1}=\tau_{\text{new}}$ and move forward to $t_{m+1}=t_m+\tau_{m+1}$; otherwise, a new $\tau_{\text{new}}$ is computed by \eqref{taunew} with $m$ replaced by $m+1$.

Recall that we need to approximate $\frac{1}{x^s+1/(\tau_m\Lambda^s)}$ and $\frac{1}{x^s+\kappa_{1,m}/\Lambda^s}$ at each time step. To test the uniform accuracy of the rEIM, we randomly select a point set $\mathcal{S}\subset [1,10^3]$ and consider the function set $$F_s :=\{f_i\in C(I): f_i(x)=(x^{s}+d_i/\Lambda^{s})^{-1},~ d_i\in \mathcal{S}\}.$$ 
The range $[1,10^3]$ contains all possible $1/\tau_m$ and $\kappa_{1,m}$. Figure \ref{fig:EIMerrortime} shows the $L^{\infty}$ interpolation error of the rEIM with 
$n=30$ for the functions in $F_{0.5}$ and $F_{1}$. 
The $L^2$ errors of numerical solutions and the accepted/rejected step sizes are presented in Figure \ref{fig:adaptive}.  In the adaptive process, there are 243 steps and 5 rejected step sizes when $s=0.5$; 238 steps and 6 rejected step sizes when $s=1$.

\begin{figure}[thp]
\begin{minipage}{1.0\linewidth}
   \centering
    \includegraphics[width=.4\linewidth]{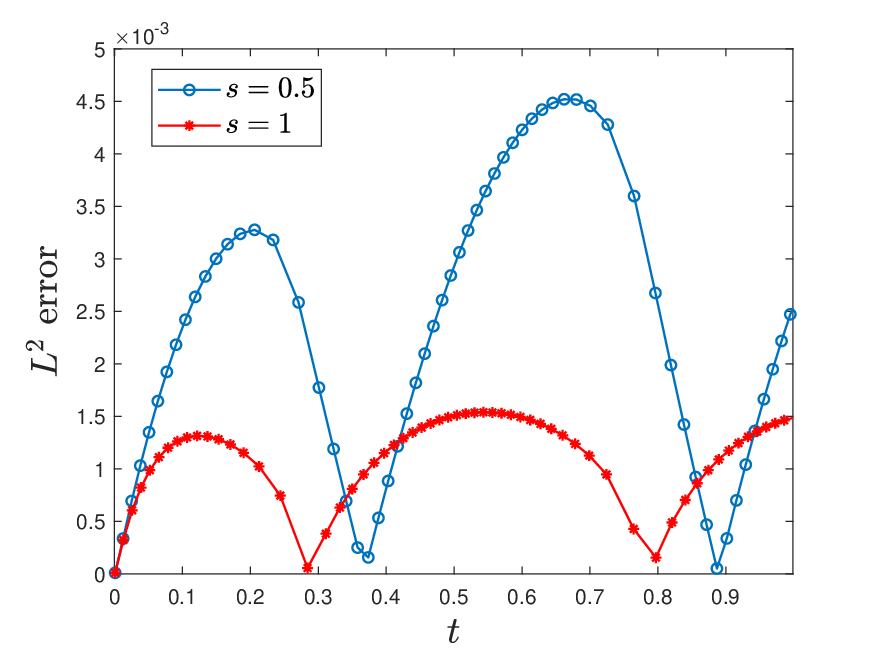}
    \includegraphics[width=.4\linewidth]{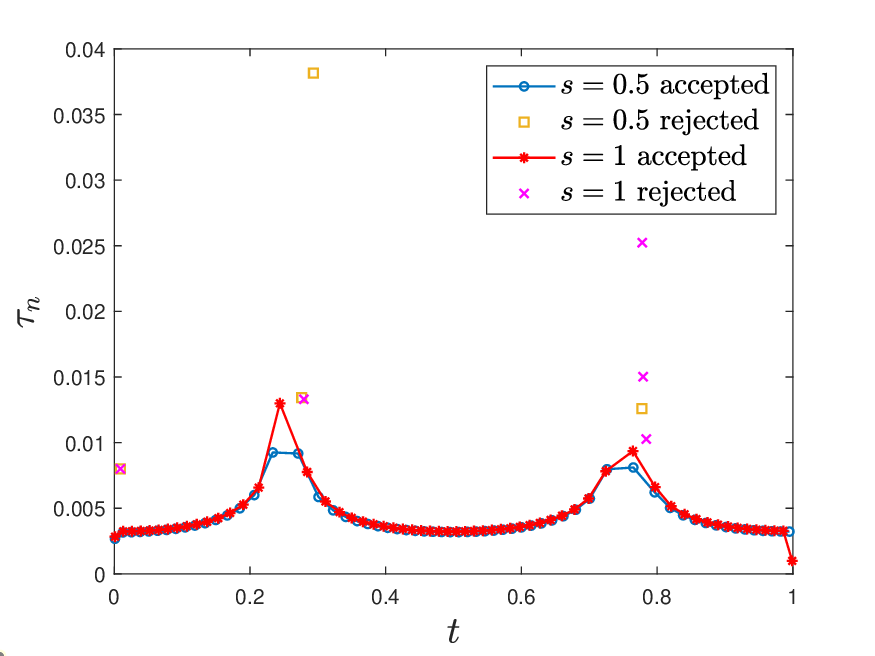}
    \caption{The $L^2$ error of numerical solutions (left); accepted steps and rejected step sizes (right).}
\label{fig:adaptive}
\end{minipage}
\end{figure}

\subsection{Approximation of Other Functions} In the last experiment, we interpolate the functions in sections \ref{subsec:prec} and \ref{subsec:exp} using the rEIM with $n=30$. The left of Figure \ref{fig:other} shows the $L^{\infty}$ error of the rEIM for $(x^{-\frac{1}{2}}+ K_ix^\frac{1}{2})^{-1}$ on $[10^{-6},1]$, where the parameter $K_i$ was randomly selected from [$10^{-6}$,1]. The right of Figure \ref{fig:other} shows the $L^{\infty}$ error of the rEIM for $\exp(-\tau_ix)$ and $\varphi(-\tau_ix)$ on $[1,10^6]$, where the time step size $\tau_i$ was randomly selected from [$0.002$,1]. 

\begin{figure}[thp]
\begin{minipage}{1.0\linewidth}
   \centering
    \includegraphics[width=.4\linewidth]{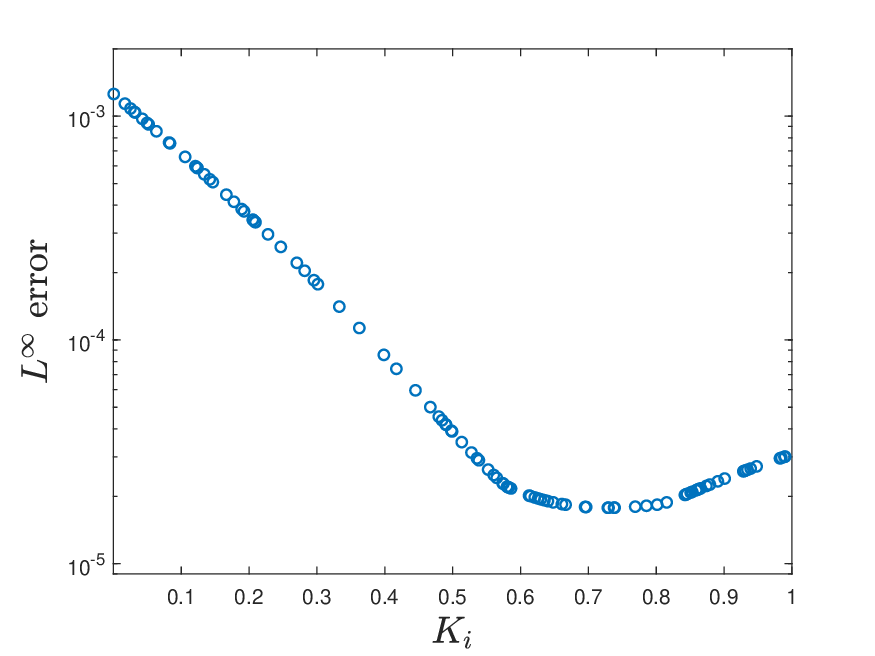}
    \includegraphics[width=.4\linewidth]{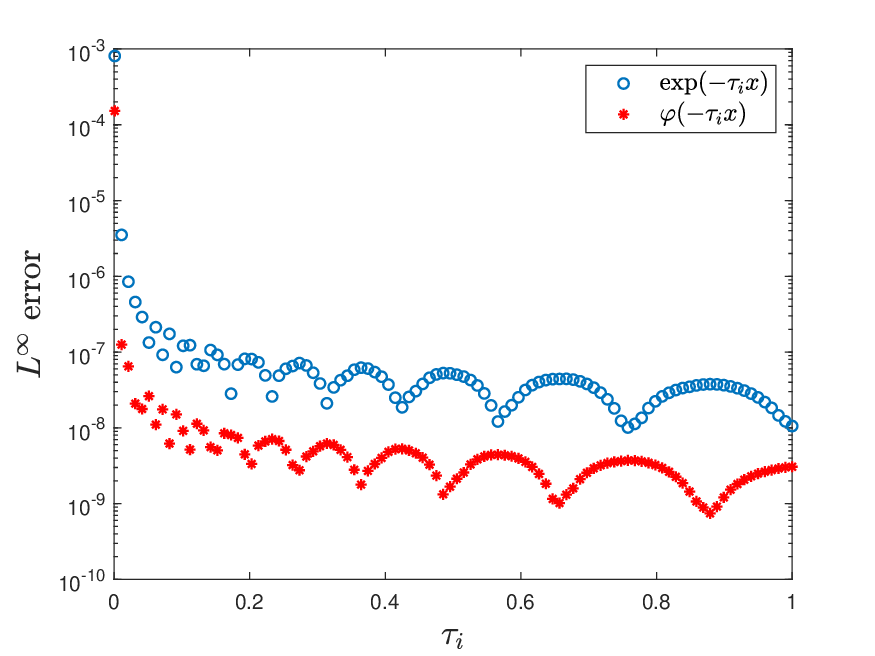}
    \caption{$L^{\infty}$ error for $(x^{-\frac{1}{2}}+ K_ix^\frac{1}{2})^{-1}$ (left); $L^{\infty}$ error for $\exp(-\tau_ix)$ and $\varphi(-\tau_ix)$ (right).}
\label{fig:other}
\end{minipage}
\end{figure}

{\section{Concluding Remarks}
In this paper, we have developed the rEIM, a new rational approximation algorithm for producing partial fraction approximation of a target function set. We  have discussed several applications of the rEIM such as the discretizations of space-fractional elliptic and parabolic equations, robust preconditioning for interface problems, and approximate evaluation of matrix exponentials. In addition, convergence rate of the rEIM is justified based on the entropy numbers of the underlying dictionary. In the future research, we shall investigate other choices of the dictionary $\mathcal{D}$ and possible applications of EIM-type algorithms beyond rational approximation.}

{\section*{Acknowledgments} The author would like to thank the anonymous referees for  many remarks that improve the quality and presentation of this paper.}

\bibliographystyle{siamplain}

\end{document}